 \newtheorem{thm}{Theorem}[section]
 \theoremstyle{definition}
 \newtheorem{defn}[thm]{Definition}
 \theoremstyle{remark}
 \newtheorem{rem}[thm]{Remark}
 \newtheorem{ex}[thm]{Example}
 \numberwithin{equation}{section}
 \newcommand{\eps}{\varepsilon}
 \newcommand{\Real}{\mathbf{R}}
  \newcommand{\dd}[1]{\frac{d}{d #1}}
 \newcommand{\Hess}{\mathrm{Hess}}
 \newcommand{\Azero}{{\bf (A0)}}
 \newcommand{\Aone}{{\bf (A1)}}
 \newcommand{\Atwo}{{\bf (A2)}}
 \newcommand{\Athreew}{{\bf (A3w)}}
 \newcommand{\Athrees}{{\bf (A3s)}}
 \newcommand{\CCw}{{\bf (B3w)}}
 \newcommand{\CCs}{{\bf (B3s)}}
\begin{document}

\title[MTW curvature for natural mechanical actions]
{
The Ma-Trudinger-Wang curvature for natural mechanical actions}

\author{Paul W.Y. Lee}
\email{plee@math.toronto.edu}
\address{Department of Mathematics, University of Toronto, ON M5S 2E4, Canada}

\author{Robert J. McCann}
\email{mccann@math.toronto.edu}
\address{Department of Mathematics, University of Toronto, ON M5S 2E4, Canada}
\date{\today}

\thanks{The first author's research was partially supported by the NSERC postdoctoral fellowship and the second author's research was supported by NSERC grant 2170006-08. \copyright 2009 by the authors}

\begin{abstract}
The Ma-Trudinger-Wang curvature --- or cross-curvature --- is an object arising in the regularity theory
of optimal transportation.
If the transportation cost is derived from a Hamiltonian action,  we show its
cross-curvature can be expressed
in terms of the associated Jacobi fields.  Using this expression,
we show the least action corresponding to a harmonic oscillator has zero cross-curvature, and
in particular satisfies the necessary and sufficient condition \Athreew\ for
the continuity of optimal maps.
We go on to study gentle perturbations of the free action by a potential, and deduce conditions on the
potential which guarantee either that the corresponding cost satisfies the more restrictive
condition \Athrees\ of Ma, Trudinger and Wang, or in some cases has positive cross-curvature. In particular,
the quartic potential of the anharmonic oscillator satisfies \Athrees\ in the perturbative regime.
\end{abstract}

\maketitle

\section{Introduction}

Let $\mu$ and $\nu$ be two Borel probability measures on the manifold $M$ and let $c:M\times M\longrightarrow\Real$ be a cost function. The theory of optimal transportation begins with the following minimization problem:

Find a Borel map which minimizes the following total cost among all Borel maps $\varphi:M\longrightarrow M$ which push $\mu$ forward to $\nu$:
\begin{equation}\label{Monge}
\int_Mc(x,\varphi(x))d\mu(x).
\end{equation}
Here the push forward $\varphi_{*}\mu$ of a measure $\mu$ by a Borel map $\varphi$ is the measure defined by $\varphi_{*}\mu(U)=\mu(\varphi^{-1}(U))$ for all Borel sets $U\subseteq M$.

Recently, there have been a series of breakthroughs in understanding regularity of the solution
to this optimal transportation problem. After the work of \cite{MaTrWa,TrWa,Lo1,KiMc1}, it is clear that
regularity of the optimal map is dictated by a quantity introduced by Ma, Trudinger and Wang,
called the Ma-Trudinger-Wang (MTW) curvature \cite{FiRiVi2},
the cost-sectional curvature \cite{Lo1}, or the cross-curvature \cite{KiMc1}
(see Section \ref{background} for the definitions and conventions used in this paper).
Efforts have been directed at understanding the corresponding condition for smoothness
and finding examples which satisfy it. Still, little is known about this
curvature and much of the work has been focused on cost given by the square of a
Riemannian distance \cite{Lo2} \cite{KimCounterexample} \cite{FiRi} \cite{FiRiVi1}.

In this paper, we study the cross-curvature for costs arising from natural mechanical systems. More precisely, let $\left<\cdot,\cdot\right>$ be a Riemannian metric on the manifold $M$ and let $|\cdot|$ be the corresponding norm. Let  $V:M\longrightarrow \Real$ be a smooth function and consider cost functions $c$ defined by minimizing the corresponding mechanical action. Defining the Lagrangian $L(x,v)=\frac{1}{2}|v|^2-V(x)$ we let
\begin{equation}\label{cost}
c_T(x,y)=\inf\int_0^TL(\gamma(t),\dot\gamma(t))dt,
\end{equation}
where the infimum is taken over all smooth curves $\gamma(\cdot)$ satisfying $\gamma(0)=x$ and $\gamma(1)=y$. We recall here that the problem of finding curves which achieve the infimum in (\ref{cost}) is called Hamilton's principle of least action. We also call the minimizers to the above infimum
{\em curves of (or paths of) least action}.

In Theorem \ref{cross} below, we give a characterization of the cross-curvature in terms of Jacobi fields.  We use it to find a number of examples of costs which are non-negatively
cross-curved.
The first of these is the family of harmonic oscillator actions.
In fact, the cross-curvature vanishes completely for this new family,
which includes the Euclidean distance squared cost in case $A=0$.

\begin{thm}[Cross-curvature vanishes for the harmonic oscillator]\label{eg}
Let $L:T\Real^n\longrightarrow\Real$ be the Lagrangian defined by
\[
L(x,v)=\frac{1}{2}|v|^2-\frac{1}{2}x\cdot Ax,
\]
where $A$ is any symmetric non-positive definite matrix. Then the cross-curvature of the corresponding cost defined by (\ref{cost}) is identically zero.
\end{thm}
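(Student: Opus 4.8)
The proof will run the Jacobi-field characterization of Theorem~\ref{cross} against this particularly simple Lagrangian. The first point to record is that on flat $\Real^n$ the Euler--Lagrange equation for $L(x,v)=\frac12\abs{v}^2-\frac12 x\cdot Ax$ is the \emph{linear} second-order system $\ddot\gamma=-A\gamma$. Writing $A=-\Omega$ with $\Omega\geq 0$ and diagonalizing $\Omega$, in an eigendirection with eigenvalue $\omega^2\geq 0$ the path of least action joining $x$ to $y$ in time $T$ is $\gamma(t)=x\,\frac{\sinh\omega(T-t)}{\sinh\omega T}+y\,\frac{\sinh\omega t}{\sinh\omega T}$ (and the affine interpolant when $\omega=0$). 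Since $\Omega\geq0$ the potential $V=\frac12 x\cdot Ax$ is concave, the action functional is convex, these curves are genuine minimizers with no conjugate points, and $c_T$ is smooth and non-degenerate; hence Theorem~\ref{cross} applies at every pair $(x,y)$.

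The decisive observation is that, the system being linear, the Jacobi equation --- the linearization of the Euler--Lagrange flow about any path of least action --- is the \emph{same} constant-coefficient equation $\ddot J=-AJ$, independent of the base curve. So in the formula of Theorem~\ref{cross} every Jacobi field that appears is a combination of $\cosh\omega t$ and $\sinh\omega t$ (respectively $1$ and $t$), with coefficients linear in the prescribed boundary data. The plan is to substitute these explicit fields into the cross-curvature expression and verify that everything cancels. I expect the cancellation to come in two pieces: the part of the formula carrying the Riemann curvature of the background metric dies because $\Real^n$ is flat, while the part carrying derivatives of the potential dies because $V$ is quadratic, so $\Hess V\equiv A$ is constant and all derivatives of $V$ of order $\geq 3$ vanish. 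The main obstacle is the bookkeeping: one still has to check that the remaining ``kinetic'' contributions, assembled from the hyperbolic Jacobi fields and their endpoint values, also cancel identically --- uniformly in the eigenvalue $\omega$ and through the degenerate limit $\omega\to 0$. Since $A$ is diagonalizable this reduces to a finite hyperbolic-trigonometric identity in each one-dimensional eigenblock, which I would verify block by block.

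As an independent confirmation (and a fallback should the substitution above become unwieldy), one can compute the cost outright. Integrating $L(\gamma,\dot\gamma)$ by parts along the minimizer and using $\ddot\gamma=-A\gamma$ collapses the action to the boundary term $\frac12\bigl(y\cdot\dot\gamma(T)-x\cdot\dot\gamma(0)\bigr)$, which from the explicit $\gamma$ above equals $\frac12\sum_i\frac{\omega_i}{\sinh\omega_iT}\bigl[(x_i^2+y_i^2)\cosh\omega_iT-2x_iy_i\bigr]$ in the eigenbasis of $A$ (with the term $\frac{1}{2T}(x_i-y_i)^2$ whenever $\omega_i=0$, so that $c_T(x,y)=\abs{x-y}^2/2T$ when $A=0$). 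This cost is a quadratic polynomial jointly in $(x,y)$; consequently every mixed third- and fourth-order partial derivative that enters the cross-curvature tensor vanishes, the mixed Hessian $D^2_{xy}c_T$ being the invertible constant matrix $\mathrm{diag}(-\omega_i/\sinh\omega_iT)$. Hence the cross-curvature is identically zero, so in particular $c_T$ satisfies \Athreew, which proves Theorem~\ref{eg}.
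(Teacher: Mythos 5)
You spot the same structural fact the paper exploits---the Jacobi equation $\ddot J=-AJ$ is constant-coefficient and does not depend on the base curve---but you do not cash it in cleanly. The paper's proof is one line from there: the Jacobi map $\mathcal J^c(y,u)$ therefore depends only on $u$ and not on the endpoint $y$, so $\langle u,\mathcal J^c(\exp^c(v+sw),u)\rangle$ is constant in $s$, its second $s$-derivative vanishes, and Theorem~\ref{cross} gives $\mathcal C\equiv 0$ at once, with no hyperbolic identities to check and no need to know the minimizers explicitly. Your first route, as written, would still need the ``block-by-block'' verification you only outline. Your fallback, however, is complete, correct, and takes a genuinely different path: evaluate the action in closed form, observe the cost is a quadratic polynomial jointly in $(x,y)$ with invertible constant cross-Hessian, so the $c$-exponential \eqref{c-exponential} is affine in the covector and every mixed third- and fourth-order derivative entering Definition~\ref{crosscurvature} vanishes. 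This is exactly the ``a posteriori'' verification the paper mentions after Theorem~\ref{eg} but does not carry out; it is more elementary and bypasses Theorem~\ref{cross} entirely, at the cost of relying on a closed-form action---which is why the paper prefers the Jacobi-map argument, since the same machinery then drives the perturbative results of later sections where no explicit formula is available. Both routes also require the hypotheses \Azero, \Aone, \Atwo\ on $\Real^n\times\Real^n$ (unique minimizers, no conjugate points); you gesture at this via concavity of $V$ and convexity of the action, while the paper verifies it by showing the time-one flow $v\longmapsto\Phi_1(x,v)$ is a diffeomorphism of each tangent space.
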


Non-positive definiteness of the matrix $A$ above is only needed for boundedness of the Lagrangian $L$. One can define a potential $V$ which is equal to $\frac{1}{2}x\cdot Ax$ on a large ball $B$ centered at the origin and stay bounded outside $B$. The cross-curvature of the corresponding cost will be identically zero in a smaller ball contained in $B$.

A posteriori, Theorem \ref{eg} can alternately be verified from the explicit form of the induced action. For instance when $A$ is negative definite with eigenvalues $-\lambda_1^2,...,-\lambda_n^2$, the transportation cost is given by 
$$
c_T(x,y) = \sum_{i=1}^n\frac{\lambda_i}{2\sinh(\lambda_iT)}\left[(x_i^2+y_i^2)\cosh\lambda_iT-2x_iy_i\right]. 
$$
Since this action differs from a coordinate reparameterization of the standard cost
$\tilde c(x,y) = - x \cdot y$ by null Lagragians  --- which depend separately on $x$ and on $y$,
hence do not affect the minimization \eqref{Monge} ---
the vanishing of all cross-curvatures follows immediately from the corresponding result
for $\tilde c$ and the coordinate independence described in
\cite{Lo1} \cite{KiMc1} \cite{Tr}.  However,  we originally discovered this vanishing
as a simple application of Theorem \ref{cross}; see Section \ref{S:HO} below.  Absent an
explicit evaluation of the action integral --- which is not possible in more complicated settings
--- it is difficult to guess the cross-curvature of a cost function without such tools as our theorem provides.

The importance of the example from Theorem \ref{eg} is the following. In the Riemannian case, it is known that non-negative sectional curvature is a necessary condition for the cost to satisfy the weak MTW condition or to be non-negatively cross curved (see \cite{Lo1}). However, the curvature term of the Jacobi field equation \eqref{Jacobi equation} for the above example is given by $A$ which is non-positive definite. This shows that the connection between the MTW condition and the non-negativity of the sectional curvature in the Riemannian case is atypical.

We also consider the perturbed Lagrangian $L_\eps(v)=\frac{1}{2}|v|^2-\eps V(x)$ on the trivial tangent bundle $T\Real^n$. We find computable conditions on $V$ for which the corresponding cost defined by (\ref{cost}) satisfies the strong MTW condition for all small enough $\eps >0$. More precisely,

\begin{thm}[Perturbed actions which become \Athrees]\label{goodperturb}
Assume that there exists a constant $C>0$ such that
\[
\int_0^1\int_0^\tau\left<u,(1-t)\partial_s^2 \Hess\, V_{x+t(v+sw)}u\right>\Big|_{s=0}dtd\tau\geq C
\]
for all $(x,v)$ in a bounded open subset of the tangent bundle $T\Real^n$ and for all unit tangent
vectors $u,w$ in the tangent space $T_x\Real^n$ which are orthogonal to each other.
Then there is an open set in the product $\Real^n\times\Real^n$ on which the cost $c$ defined by $L_\eps$ and (\ref{cost}) satisfies the strong MTW condition \Athrees\ for all small enough $\eps>0$.
\end{thm}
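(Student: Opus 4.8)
The plan is to regard the cross-curvature $\mathfrak{C}_\eps$ of the cost $c$ defined by $L_\eps$ and \eqref{cost} as a smooth function of the coupling constant $\eps$ and to Taylor-expand it about $\eps=0$, where it vanishes identically. Parametrizing least-action curves on $[0,1]$ (a linear time change absorbs the general $T$ into a positive constant), for $\eps$ small and $(x,y)$ in a fixed compact set the path of least action $\gamma_\eps$ joining $x$ to $y$ is the unique solution of $\ddot\gamma_\eps=-\eps\,\nabla V(\gamma_\eps)$ with the prescribed endpoints, a small perturbation of the segment $\gamma_0(t)=x+t(y-x)$. By smooth dependence of solutions of ordinary differential equations on parameters and initial data, $\gamma_\eps$ --- together with the Jacobi fields along it, which solve a linear system whose only $\eps$-dependence is the curvature term $\eps\,\Hess\,V_{\gamma_\eps}$ --- depends smoothly on $(\eps,x,y)$; hence, through the Jacobi-field formula of Theorem \ref{cross}, $\mathfrak{C}_\eps$ (evaluated at a pair of base points and on a pair of tangent directions) is a smooth function of $\eps$, with all bounds uniform once the base points range over the compact set and the directions over a product of unit spheres. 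Thus $\mathfrak{C}_\eps=\mathfrak{C}_0+\eps\,\dot{\mathfrak{C}}+O(\eps^2)$ with uniform remainder, and $\mathfrak{C}_0\equiv0$ because $L_0$ is the free Lagrangian $\tfrac12|v|^2$, i.e.\ the case $A=0$ of Theorem \ref{eg}.

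The heart of the argument is the identification of $\dot{\mathfrak{C}}$. By Hamilton's principle the first-order perturbation of the value function consists of the direct term alone --- the boundary term vanishing because the endpoints of $\gamma_\eps$ are held fixed --- so
\[
\dot c(x,y):=\frac{\partial}{\partial\eps}\Big|_{\eps=0}c(x,y)=-\int_0^1 V\big(x+t(y-x)\big)\,dt .
\]
Since the cross-curvature is a fixed expression in the derivatives of the cost through order four and $\mathfrak{C}_0\equiv0$, $\dot{\mathfrak{C}}$ equals the linearization of that expression at the flat cost $\bar c(x,y)=\tfrac12|x-y|^2$ applied to $\dot c$. Flatness of $\bar c$ --- all its derivatives of order $\geq3$ vanish, its mixed Hessian being $-\mathrm{Id}$ --- annihilates every term of this linearization that carries a $\bar c$-derivative of order $\geq3$, so on the orthogonal directions $u\perp w$ of \Athrees\ only the pure mixed fourth derivative of $\dot c$ survives: $\dot{\mathfrak{C}}(u,w)$ is a strictly positive multiple $\kappa=\kappa(T)$ of $\partial_x^2\partial_y^2(-\dot c)(u,u;w,w)$. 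Differentiating $\dot c$ (the argument $x+t(y-x)$ is affine, contributing a factor $1-t$ per $x$-derivative and $t$ per $y$-derivative) and using
\[
\partial_s^2\left<u,\Hess\,V_{x+t(v+sw)}u\right>\Big|_{s=0}=t^2\,D^4V_{x+tv}(u,u,w,w),\qquad v:=y-x,
\]
one gets $\partial_x^2\partial_y^2(-\dot c)(u,u;w,w)=\int_0^1(1-t)^2\left<u,\partial_s^2\Hess\,V_{x+t(v+sw)}u\right>\big|_{s=0}\,dt$, which by Fubini's theorem on $\{0\leq t\leq\tau\leq1\}$ equals the iterated integral
\[
\int_0^1\int_0^\tau\left<u,(1-t)\partial_s^2\Hess\,V_{x+t(v+sw)}u\right>\Big|_{s=0}\,dt\,d\tau
\]
of the hypothesis. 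Hence $\dot{\mathfrak{C}}(u,w)$ is $\kappa$ times this integral, so $\dot{\mathfrak{C}}(u,w)\geq\kappa C>0$, uniformly over the compact set and over orthogonal unit directions. (As a sanity check, for the quartic potential $D^4V_x(u,u,w,w)=8$ whenever $u,w$ are orthogonal unit vectors, so the integral is a positive constant.)

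To conclude, take the open set in $\Real^n\times\Real^n$ to consist of pairs $(x,y)$ with $(x,y-x)$ in the bounded open subset $U\subset T\Real^n$ of the hypothesis, and let $K$ be a compact subset of its interior; since the initial data of the perturbed least-action paths depend continuously on $\eps$, these data stay in $U$ for all $(x,y)\in K$ once $\eps$ is small. Let $M$ bound the $O(\eps^2)$ remainder uniformly over $K$ and over unit directions (such a bound exists by the smooth dependence above). Then for $0<\eps<\eps_0:=\kappa C/(2M)$,
\[
\mathfrak{C}_\eps(u,w)\;\geq\;\eps(\kappa C-M\eps)\;\geq\;\tfrac12\kappa C\,\eps\;>\;0
\]
for all orthogonal unit $u,w$; and for such $\eps$ the cost is smooth on $K$, the free action having no conjugate points along $[0,1]$, a property stable under small perturbations of the Lagrangian. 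Hence on the interior of $K$ the cost defined by $L_\eps$ and \eqref{cost} has a uniformly positive MTW tensor on admissible directions, which is the strong MTW condition \Athrees.

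The step I expect to be the main obstacle is the linearization above: writing out the coordinate (or Jacobi-field) expression for the cross-curvature, differentiating it in $\eps$ at $0$, checking that all terms involving a $\bar c$-derivative of order $\geq3$ cancel, and verifying that the survivor reassembles \emph{exactly} into the hypothesis integral with a strictly positive constant --- the bookkeeping being tamed, but not trivialized, by the vanishing of the higher derivatives of $\bar c$. By contrast the uniformity of the $O(\eps^2)$ remainder and the smoothness of $c$ for small $\eps$ are routine once the dependence of the Euler--Lagrange and Jacobi equations on $\eps$ and the endpoints is at hand.
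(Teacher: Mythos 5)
Your proposal is correct, but it takes a genuinely different route from the paper's. The paper never writes out the coordinate (or any explicit fourth-order) formula for the cross-curvature: instead it perturbs the \emph{Jacobi map} of Theorem \ref{cross} directly. Writing the Jacobi equation $\partial_t^2 J_\eps + \eps\,\Hess V_{\gamma_\eps}J_\eps = 0$ with $J_\eps(0)=u$, $J_\eps(1)=0$, differentiating in $\eps$ at $\eps=0$, and integrating the resulting linear ODE for $X=\partial_\eps J_\eps|_{\eps=0}$ against the flat data $\gamma_0(t,s)=x+t(v+sw)$, $J_0(t)=(1-t)u$ gives $\partial_t X|_{t=0}=\int_0^1\int_0^\tau(1-t)\Hess V_{x+t(v+sw)}u\,dt\,d\tau$ at once; plugging this into $\mathcal C_\eps=\frac32\partial_s^2\langle u,\mathcal J^{c_\eps}(\cdot,u)\rangle|_{s=0}$ and using $\mathcal C_0\equiv0$ yields the same lower bound $\tfrac32\eps C + o(\eps)$. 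You instead linearize the cost itself ($\dot c = -\int_0^1 V(x+t(y-x))\,dt$, correct by the envelope theorem) and then linearize the coordinate expression for the cross-curvature at $\bar c=\tfrac12|x-y|^2$. The cancellation you flag as the main risk does go through --- with $\mathcal C=-\tfrac32(c_{ij,kl}-c_{ij,p}c^{p,q}c_{q,kl})u^iu^j w^kw^l$ and $w=-(\bar c_{x,y})^{-1}\alpha_1$, every term in the Fr\'echet derivative other than $-\tfrac32\dot c_{ij,kl}u^iu^jw^kw^l$ carries a factor $\bar c_{ij,k}$, $\bar c_{k,ij}$ or $\bar c_{ij,kl}$, all zero for the flat cost, and the $\eps$-variation of the $c$-exponential is likewise paired with a vanishing coefficient --- and one finds $\kappa=\tfrac32$; your Fubini step then recovers the hypothesis integral exactly. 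Both approaches are sound; the paper's avoids the coordinate bookkeeping entirely by exploiting Theorem \ref{cross}, whereas yours makes visible the structural reason the linearization collapses (flatness of the free cost). Note also that neither the paper's computation nor yours actually uses orthogonality of $u$ and $w$ to obtain the formula $\dot{\mathfrak C}=\tfrac32\,\partial_x^2\partial_y^2(-\dot c)(u,u;w,w)$; orthogonality enters only in weakening the hypothesis, which is why the paper's Theorem \ref{goodperturbprecise} states the stronger conclusion \CCs\ when the integral bound holds for all unit $u,w$.
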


Theorem \ref{goodperturb} tells us that the MTW condition is more related to change in the
curvature along c-segments than to the curvature itself. Note also that the condition in
Theorem \ref{goodperturb} is easily computable whereas the cross-curvature is hard to compute
in general since --- although it is local in the product manifold $M\times M$ \cite{KiMc1}---
it is nonlocal in $M$.
We further illustrate this computability by considering the radially symmetric potentials,
i.e.\ $V(x)=f\big(\frac{|x|^2}{2}\big)$. In this case, the condition in
Theorem \ref{goodperturb} is satisfied if
\begin{equation}\label{simplegoodperturb}
f''\big({\textstyle \frac{|x+tv|^2}2}\big)\geq C>0,
\quad f'''\big({\textstyle \frac{|x+tv|^2}2}\big)\geq 0,
\quad f^{(4)}\big({\textstyle \frac{|x+tv|^2}2}\big)\geq 0.
\end{equation}

In particular, the conditions in (\ref{simplegoodperturb}) are satisfied if $f(z)=z^2$,
which arises from the physical model of the anharmonic oscillator.

\section{Background:  Cross-curvature and MTW Conditions}\label{background}

In this section, we will review some basic facts about the optimal transportation problem needed in this paper. The assumptions in the theorems stated in this section are simplified to avoid heavy notation. The corresponding theorems with relaxed assumptions can be found, for instance, in \cite{Vi}.

Let $M$ and $N$ be two smooth manifolds (possibly with boundaries) and let $\mu$ and $\nu$ be Borel probability measures on $M$ and $N$, respectively, with compact support. Let $c:M\times N\longrightarrow\Real$ be a bounded continuous cost function, so that the corresponding optimal transportation problem is the following:

Find Borel maps which minimize the following functional among all Borel maps $\varphi:M\longrightarrow N$ which push $\mu$ forward to $\nu$ (ie. $\mu(\varphi^{-1}(U))=\nu(U)$ for all Borel sets $U \subset N$):
\[
\int_Mc(x,\varphi(x))d\mu(x).
\]

The sufficient conditions for the above problem to have a unique solution are given by the following theorem (see \cite{Vi} for the proof).

\begin{thm}[Existence and uniqueness of optimal maps]\label{MongeExist}
Suppose that the cost $c$ and the measure $\mu$ satisfy the following assumptions
\begin{enumerate}
  \item $\mu$ is absolutely continuous with respect to the Lebesgue measure,
  \item the cost $c(x,y)$ is locally lipschitz in $x$, uniformly in $y$,
  \item $c$ is superdifferentiable everywhere,
  \item the map $y\longmapsto d_x c(x,y)$ is injective on its domain of definition.
\end{enumerate}
Then there is a solution $\varphi$ (called the optimal map) to the above optimal transportation problem. Moreover, it is unique $\mu$-almost everywhere.
\end{thm}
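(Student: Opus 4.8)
The plan is to pass through the Kantorovich relaxation of the problem and then extract a structure theorem for the optimal plans. First I would replace the minimization over Borel maps $\varphi$ by the minimization over Borel probability measures $\gamma$ on $M\times N$ whose marginals are $\mu$ and $\nu$; this relaxed problem admits a minimizer $\gamma$, because the set of such competitors is weak-$\ast$ compact (Prokhorov's theorem, using that $\mu$ and $\nu$ have compact support) and $\gamma\mapsto\int c\,d\gamma$ is weak-$\ast$ lower semicontinuous for bounded continuous $c$. The classical observation of Kantorovich, together with duality, is then that the support of any optimal $\gamma$ is $c$-cyclically monotone and is contained in the $c$-superdifferential $\partial^c\psi$ of a $c$-concave potential $\psi=\psi^{cc}$; concretely, there is a function $\phi$ with $\psi(z)\le c(z,y)-\phi(y)$ for all $z$ and equality at every $(x,y)\in\operatorname{supp}\gamma$.

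Next I would upgrade this to a graph using the four hypotheses in turn. By assumption (2), $\psi$, being an infimum of the uniformly locally Lipschitz family $z\mapsto c(z,y)-\phi(y)$, is itself locally Lipschitz; Rademacher's theorem then makes it differentiable Lebesgue-a.e., hence $\mu$-a.e. by assumption (1). Fix such a point $x$ of differentiability and any $y$ with $(x,y)\in\operatorname{supp}\gamma$. The function $z\mapsto c(z,y)-\phi(y)-\psi(z)$ is non-negative and vanishes at $x$, so $c(\cdot,y)$ touches $\psi$ from above at $x$; combining the superdifferentiability of $c(\cdot,y)$ at $x$ (assumption (3)) with the differentiability of $\psi$ at $x$ forces $c(\cdot,y)$ to be \emph{differentiable} at $x$ with $d_x c(x,y)=d\psi(x)$. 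Hence the section $\{y:(x,y)\in\operatorname{supp}\gamma\}$ is contained in the fiber $\big(d_x c(x,\cdot)\big)^{-1}\big(d\psi(x)\big)$, which is a single point by the injectivity assumption (4). Therefore $\operatorname{supp}\gamma$ coincides, up to a $\mu$-null set, with the graph of the Borel map $\varphi(x):=\big(d_x c(x,\cdot)\big)^{-1}\big(d\psi(x)\big)$, and this $\varphi$ solves the original Monge problem \eqref{Monge}.

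For uniqueness I would use linearity: the Kantorovich functional is linear in $\gamma$ over a convex competitor set, so the optimal plans form a convex set, and the argument just given shows that \emph{every} optimal plan is induced by a map. If $\varphi_1$ and $\varphi_2$ were two optimal maps, then $\tfrac12\big((\mathrm{id}\times\varphi_1)_*\mu+(\mathrm{id}\times\varphi_2)_*\mu\big)$ would again be an optimal plan, hence induced by a map — which is possible only when $\varphi_1=\varphi_2$ holds $\mu$-almost everywhere.

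The main obstacle — the one step where the hypotheses must genuinely be used rather than merely quoted — is the implication "$c(\cdot,y)$ touches $\psi$ from above at $x$'' $\implies$ "$d_x c(x,y)=d\psi(x)$''. This is exactly where assumption (3) is indispensable: without one-sided regularity of $c$ in the $x$-variable one can only conclude that $d\psi(x)$ belongs to some generalized sub/superdifferential of $c(\cdot,y)$, not that the classical $x$-gradient of $c$ exists and equals it, and it is the existence of that genuine gradient, together with the injectivity hypothesis (4), that pins down $\varphi(x)$ uniquely. The remaining ingredients — weak-$\ast$ compactness, lower semicontinuity, Kantorovich duality, $c$-cyclical monotonicity of optimal supports, and Rademacher's theorem — are standard and can be cited from \cite{Vi}.
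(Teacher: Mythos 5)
The paper gives no proof of Theorem~\ref{MongeExist}, deferring entirely to \cite{Vi}; your sketch is a correct reproduction of the standard argument found there and in its antecedents --- Kantorovich relaxation and compactness, $c$-cyclical monotonicity and a $c$-concave potential $\psi$ from duality, local Lipschitzness of $\psi$ plus Rademacher and hypotheses~(1)--(2), the sub/super-differential sandwich (hypothesis~(3)) forcing $d_xc(x,y)=d\psi(x)$, hypothesis~(4) pinning down $\varphi$, and convexity of the optimal-plan set for uniqueness --- so this is essentially the same approach. The only points you gloss over are routine technicalities (Borel measurability of $\varphi$ via measurable selection, and the applicability of duality for the present class of costs), which are treated in the cited reference in the usual way.
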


In order to discuss regularity of the optimal map, assumptions stronger than those stated in Theorem \ref{MongeExist} are needed. First, we need an open set $\mathcal O=\mathcal M\times\mathcal N$ contained in the product $M\times N$ for which the following assumptions hold:
\begin{description}
    \item[\Azero] $c$ is $C^4$ smooth on $\mathcal O$,
    \item[\Aone] the map $y\longmapsto d_xc(x,y)$ is injective on $\mathcal N$ for each $x$ in $\mathcal M$,
    \item[\Atwo] the map $v\longmapsto d_x d_y c(x,y)(v)$ from the tangent space $T_yN$ to the cotangent space $T_x^*M$ is injective for all $(x,y)$ in $\mathcal O$.
\end{description}

In this paper, we focus on transportation costs given by (\ref{cost}). The above conditions $\Azero, \Aone, \Atwo$ corresponding to these costs are discussed in the Appendix. 

Next, we discuss the most important condition in the regularity theory of optimal maps;
called the Ma-Trudinger-Wang (MTW) condition, it involves the cross-curvature.
To do this, let $\mathcal K_x$ be the subset of the cotangent space $T^*_xM$ defined by
\[
\mathcal K_x=\{(x,-d_xc(x,y))|y\in\mathcal N\}
\]
and let $\mathcal K=\bigcup\limits_{x\in\mathcal M}\mathcal K_x$.

\begin{defn}[$c$-exponential]
The c-exponential map $\exp^c:\mathcal K\longrightarrow M$ corresponding to the cost function $c$ is defined by
\begin{equation}\label{c-exponential}
\exp^c(x,\alpha)=[d_x c(x,\cdot)]^{-1}(-\alpha).
\end{equation}
\end{defn}

\begin{defn}[Cross-curvature]\label{crosscurvature}
The cross-curvature $\mathcal C:TM\oplus\mathcal K\oplus T^*M\longrightarrow\Real$ corresponding to the cost $c$ is defined by
\[
\mathcal C_x(u,\alpha,\alpha_1)=-\frac{3}{2}
\partial_s^2\partial_t^2c(\gamma(t),\exp^c(x,\alpha+s \alpha_1))\Big|_{t=s=0}
\]
where $\gamma(\cdot)$ is any curve which satisfies $\gamma(0)=x$, $\dot\gamma(0)=u$. Curves of the form $s\longmapsto \exp^c(x,\alpha+s \alpha_1)$ in the definition of the cross-curvature are called c-segments.
\end{defn}

In \cite{KiMc1}, it is shown that the cross-curvature defined above can be characterized as the
sectional curvature of a certain semi-riemannian structure on $\mathcal O$ in which $c$-segments
form lightlike geodesics --- and hence the name.
The same quantity has also been called the Ma-Trudinger-Wang curvature \cite{FiRiVi2}
since it appeared first in \cite{MaTrWa}. Let $\mathcal S$ be the set defined by
\[
\mathcal S_x:=\{(x,u,\alpha,\alpha_1)\in T_xM\oplus\mathcal K_x\oplus T_x^*M|\alpha_1(u)=0\}.
\]
and let $\mathcal S=\bigcup\limits_{x\in\mathcal M}\mathcal S_x$. In this paper, we use the name Ma-Trudinger-Wang curvature to denote the restriction of the cross-curvature $\mathcal C$ to the set $\mathcal S$.

\begin{defn}\label{MTWcurvature}
The Ma-Trudinger-Wang curvature $MTW$ is defined by
\[
MTW=\mathcal C\Big|_{\mathcal S}.
\]
\end{defn}

Finally, we can state the MTW conditions and the cross-curvature conditions.

\begin{description}
    \item[\Athreew] the cost $c$ satisfies the weak MTW condition on $\mathcal O$ if $MTW\geq 0$ on $\mathcal S$,
    \item[\Athrees] the cost $c$ satisfies the strong MTW condition on $\mathcal O$ if it satisfies the weak MTW condition on $\mathcal O$ and $MTW_x(u,\alpha,\alpha_1)=0$ only if $u=0$ or $\alpha_1=0$,
    \item[\CCw] the cost $c$ is non-negatively cross-curved on $\mathcal O$ if $\mathcal C\geq 0$,
    \item[\CCs] the cost $c$ is positively cross-curved on $\mathcal O$ if it is non-negative cross-curved on $\mathcal O$ and  $\mathcal C_x(u,\alpha,\alpha_1)=0$ only if $u=0$ or $\alpha_1=0$.
\end{description}

The relevance of these conditions to the regularity theory of optimal maps can be found
in \cite{MaTrWa,Lo1,Lo2,KiMc1,TrWa,LoVi,FiLo,FiRi,FiRiVi1,FiRiVi2,FiKiMc2}. Recently,
it was shown  that the conditions \CCw\ and \CCs\ also have interesting
applications to microeconomics \cite{FiKiMc1} and statistics \cite{Sei09p} which
have little to do with smoothness.

\section{The MTW Curvature and the Jacobi Map}

Let $\mathcal U$ be an open subset of the cotangent bundle $T^*M$ such that all elements in $\mathcal U$ are regular and not on the conjugate locus (the definitions of a regular covector and the conjugate locus are analogous to the Riemannian case, see appendix for the precise definitions). 

For the rest of the paper, we consider Lagrangians $L:TM\to\Real$ which are of the form $L(x,v)=\frac{1}{2}|v|^2-V(x)$, where $|\cdot|$ denotes the norm corresponding to a fixed Riemannian metric. We also identify the tangent and the cotangent bundle by this Riemannian metric. 

Let $u,v,w$ be tangent vectors contained in the tangent space $T_xM$ at a point $x$ and assume that $v$ is contained in the open set $\mathcal U$. Let $\tau \in \Real$ parameterize a family of least action paths $t \in [0,1] \longmapsto\varphi(t,\tau)$ corresponding to the cost (\ref{cost}) with initial velocity $v+\tau w$. Let $J:=\partial_\tau\varphi\Big|_{\tau=0}$ be the corresponding Jacobi field. Let $D_t$ be the covariant derivative along the curve $t\longmapsto\varphi(t,\tau)$.
Theorem \ref{potentialJacobi} asserts
the Jacobi field $J$ satisfies the well-known equation
\begin{equation}\label{Jacobi equation}
D^2_t J+R(\partial_t\varphi,J)\partial_t\varphi+\Hess\,V_{\varphi}(J)=0.
\end{equation}

All Jacobi fields are solutions to the above second order differential equation, the Jacobi equation.
We can associate two problems to this differential equation: the initial value problem and the
boundary value problem. We use the term Jacobi map to refer to the map which sends the
initial value of the boundary value problem to the initial derivative
\eqref{Jacobi map} of the corresponding solution whose terminal value is (its first) zero.
More precisely, let $x$ and $y$ be two points on the manifold $M$ which can be joined by a
unique path $\gamma(\cdot)$ of least action.
Let $t\longmapsto J(t)$ be the Jacobi field along $\gamma(\cdot)$ such that $J(0)=u$, $J(1)=0$, and $J(t)\neq 0$ for all $t$ in the interval $(0,1)$. We define the {\em Jacobi map} $\mathcal J^c: M\times TM\longrightarrow TM$ by
\begin{equation}\label{Jacobi map}
 \mathcal J^c(y,u)=\dd t J\Big|_{t=0}.
\end{equation}
It is not hard to see that the Jacobi map is linear in the variable $u$.

The computation of the cross-curvature boils down to the computation of the Jacobi map according to the following theorem.
In the special case of a {\em Riemannian} action, Figalli, Rifford and 
Villani developed a related result independently, see 
Proposition 2.4 of \cite{FiRiVi1},  as we learned after 
the original draft of the present manuscript was complete.

\begin{thm}[Cross-curvature and the Jacobi map]\label{cross}
 The cross-curvature $\mathcal C$ is given in terms of the Jacobi map $\mathcal J^c$ by
\[
\mathcal C(u,v,w)=\frac{3}{2}\partial_s^2\left<u,\mathcal J^c(\exp^c(v+sw),u)\right>\Bigg|_{s=0}.
\]
Here $\exp^c$ denotes the c-exponential \eqref{c-exponential}.
\end{thm}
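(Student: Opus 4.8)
The plan is to unravel both sides of the claimed identity by relating the second variation of the cost to the geometry of least-action paths, using the $c$-exponential map as the change of variables that turns $y$-dependence into velocity-dependence. First I would fix $x$ and write $y(s) = \exp^c(x, \alpha + s\alpha_1)$ for the $c$-segment emanating from $(x,\alpha)$, where $\alpha = -d_x c(x,y)$; by definition of the $c$-exponential \eqref{c-exponential}, the path $t\mapsto\varphi(t,s)$ of least action from $x$ to $y(s)$ has an initial covector (equivalently, initial velocity, under the Riemannian identification) that depends \emph{affinely} on $s$. This is the crucial structural fact: moving along a $c$-segment corresponds to moving affinely in the space of initial velocities. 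So if $v$ is the initial velocity of the path to $\exp^c(x,\alpha)$ and $w := \partial_s|_{s=0}(\text{initial velocity})$, then $\partial_s\varphi(t,s)|_{s=0}$ is precisely the Jacobi field $J$ along $\varphi(\cdot,0)$ with $J(0)=0$ (the source $x$ is fixed) and specified initial derivative; matching this against the family in Theorem~\ref{potentialJacobi} identifies the relevant Jacobi fields.

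Next I would compute $\partial_t^2 c(\gamma(t), y(s))$ for a curve $\gamma$ with $\gamma(0)=x$, $\dot\gamma(0)=u$. The key classical identity is that the gradient (in the first slot) of the least-action cost is minus the initial velocity of the optimal path: $d_x c(x,y) = -\langle \dot\varphi(0),\cdot\rangle$ in the Riemannian identification. Differentiating along $\gamma$ and then along $s$ converts $\partial_t^2\partial_s^2 c$ into an expression involving how the initial velocity field of the two-parameter family of least-action paths (parametrized by endpoint $\gamma(t)$ and by $s$) varies. The Jacobi field with $J(1)=0$, $J(0)=u$ enters here: it is the variation field of least-action paths whose endpoint moves with velocity $u$ while the source is held at $y(s)$; its initial derivative $D_t J|_{t=0}$ is by definition $\mathcal J^c(y(s),u)$. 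Assembling these, $\partial_t^2 c(\gamma(t),y(s))|_{t=0}$ should reduce to something like $-\langle u, \mathcal J^c(y(s),u)\rangle$ up to sign and lower-order terms that vanish upon the second $t$-derivative, and then the remaining $\partial_s^2|_{s=0}$ together with the factor $-\tfrac32$ from Definition~\ref{crosscurvature} produces exactly $\tfrac32\partial_s^2\langle u,\mathcal J^c(\exp^c(v+sw),u)\rangle|_{s=0}$.

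The main obstacle, I expect, is bookkeeping the two independent variations and making sure the ``cross'' structure is respected: one must differentiate a two-parameter family $\varphi(t;s,r)$ of least-action paths (with $s$ sliding the source along the $c$-segment and $r$ sliding the target along $\gamma$), evaluate at $s=r=0$, and carefully separate which Jacobi fields satisfy which boundary conditions ($J(0)=0$ versus $J(1)=0$, $J(0)=u$). The non-symmetry of $\partial_t^2\partial_s^2$ in its roles is essential and a naive symmetric treatment would collapse the identity. A second, more technical point is the reduction of $d_x c$ to the initial velocity: this is standard for smooth least-action costs away from the conjugate locus (the envelope/first-variation formula), and I would invoke it as in the Appendix, but one must check it is applicable on the open set $\mathcal O$ where $\Azero$--$\Atwo$ hold and where the relevant paths are unique and conjugate-point-free, so that $\exp^c$ is well defined and smooth. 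Once these are in place, the computation is a direct manipulation of covariant derivatives of the two-parameter variation, using \eqref{Jacobi equation} only implicitly through the definition of $\mathcal J^c$; I would not expect to need the Jacobi equation itself beyond knowing the variation fields are Jacobi fields with the stated boundary data.
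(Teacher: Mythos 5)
Your plan is correct and follows a genuinely different route than the paper's proof. The paper works entirely at the level of the action integral: it writes $\mathcal C$ as $-\frac{3}{2}\partial_s^2\partial_\tau^2$ of $\int_0^1(\frac{1}{2}|\partial_t\varphi|^2-V)\,dt$, then uses the Euler--Lagrange equation $D_t\partial_t\varphi=-\nabla V_\varphi$ together with conservation of energy to convert the $\tau$-derivative of the energy into a boundary term plus a potential integral, and finally observes that rewriting the Lagrangian as (energy) $-2V$ makes the potential integrals cancel, leaving exactly $-\langle u,\mathcal J^c(\sigma(s),u)\rangle$. You instead start from the first-variation (envelope) identity $d_xc(x,y)=-\langle\dot\varphi(0),\cdot\rangle$, differentiate twice more along $\gamma$ and twice along the $c$-segment, and read off the Jacobi map from $D_t(\text{initial velocity})|_{t=0}$. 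Both routes reach the same intermediate formula $\partial_\tau^2 c|_{\tau=0}=-\langle u,\mathcal J^c(\sigma(s),u)\rangle$ (up to a residual term), after which the final $\partial_s^2$ is common to both. Your approach is conceptually cleaner and sidesteps the energy-conservation bookkeeping, at the cost of invoking the envelope formula as a black box; incidentally, it is closer in spirit to the independent proof of Figalli--Rifford--Villani that the authors cite.

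Two points to tighten. First, the middle paragraph has the orientations swapped: you describe ``the variation field of least-action paths whose endpoint moves with velocity $u$ while the source is held at $y(s)$,'' whereas in the paper's conventions the Jacobi field $J$ in the definition of $\mathcal J^c$ has $J(0)=u$ at the varying source $x$ and $J(1)=0$ at the fixed target $y(s)$; what you want is to vary the source $\gamma(\tau)$ while holding the target $y(s)$ fixed, so that $\mathcal J^c(y(s),u)=D_\tau\bigl(\dot\varphi_{\gamma(\tau),y(s)}(0)\bigr)\big|_{\tau=0}$. Second, you should be explicit about why the residual term $-\langle\dot\varphi_{x,y(s)}(0),D_t\dot\gamma(0)\rangle$ in $\partial_t^2 c$ drops out: it does so after $\partial_s^2$ because $\dot\varphi_{x,y(s)}(0)=v+sw$ is affine in $s$ (equivalently, it vanishes outright if $\gamma$ is taken to be a geodesic, which is what the paper does via $\gamma(\tau)=\exp(\tau u)$). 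Stating this explicitly also makes the independence of $\mathcal C$ on the choice of $\gamma$ transparent, which the paper's proof leaves implicit.
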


\begin{proof}[Proof of Theorem \ref{cross}]
Let $t\in [0,1]\longmapsto \varphi(t,\tau,s)$ be a curve of least action which starts from the point $\exp(\tau u)$ and ends at the point $\exp^c(v+sw)$. It follows that the cross-curvature (see Definition \ref{crosscurvature}) is given by
\begin{equation}\label{MTWsimple}
 \mathcal C(u,v,w)=-\frac{3}{2}\partial_s^2\partial_\tau^2 \int_0^1\left[\frac{1}{2}|\partial_t\varphi|^2-V(\varphi)\right]dt\Bigg|_{s=\tau=0}
\end{equation}
Since $t\longmapsto\varphi(t,\tau,s)$ is a curve of least action,
we have $D_t\partial_t\varphi=-\nabla V_\varphi$
as in Theorem \ref{potentialJacobi}. It follows that
\[
\begin{split}
\partial_\tau \left[\frac{1}{2}\left|\partial_t\varphi\right |^2+V(\varphi)\right]  &=\left<D_\tau\partial_t \varphi,\partial_t\varphi\right >+dV(\partial_\tau\varphi)\\& =\partial_t\left<\partial_t\varphi,\partial_\tau\varphi\right>+2dV(\partial_\tau\varphi).
\end{split}
\]

Since the energy $\frac{1}{2}|v|^2+V(x)$ is invariant along the curve $(x(t),\dot x(t))=(\varphi,\partial_t\varphi)$, the left side of the above equation is independent of $t$. So we integrate with respect to $t$ and get
\[
t\partial_\tau \left[\frac{1}{2}\left|\partial_t\varphi\right |^2+V(\varphi)\right] =\left<\partial_\tau\varphi,\partial_t\varphi\right>\Big|_0^t+2\int_0^t dV(\partial_\tau\varphi)dt.
\]

The minimizers $t\longmapsto \varphi(t,\tau,s)$ all end at the point $\exp^c(v+sw)$ independent of $t$. Therefore, $\partial_\tau\varphi\Big|_{t=1}=0$ and the above equation yields
\[
(t-1)\partial_\tau \left[\frac{1}{2}\left|\partial_t\varphi\right |^2+V(\varphi)\right] =\left<\partial_\tau\varphi,\partial_t\varphi\right>-2\int_t^1 dV(\partial_\tau\varphi)dt.
\]
If we set $t=0$, then the above equation becomes
\[
\partial_\tau \left[\frac{1}{2}\left|\partial_t\varphi\right |^2+V(\varphi)\right] =-\left<\partial_\tau\varphi,\partial_t\varphi\right>\Big|_{t=0}+2\int_0^1 dV(\partial_\tau\varphi)dt.
\]

Recall that $\varphi\Big|_{t=0}=\exp(\tau u)$, so $D_\tau\partial_\tau\varphi\Big|_{t=0}=0$. Therefore, if we differentiate the above equation with respect to $\tau$, then we have
\[
\begin{split}
&\partial_\tau^2 \left[\frac{1}{2}\left|\partial_t\varphi\right |^2+V(\varphi)\right]\Big|_{\tau=0} \\&= -\left<\partial_\tau\varphi,D_\tau\partial_t\varphi\right>\Big|_{\tau=t=0}+2\int_0^1 \partial_\tau^2(V(\varphi))dt\Big|_{\tau=0}\\&= -\left<u,\mathcal J^c(\sigma(s),u)\right>\Big|_{\tau=t=0}+2\int_0^1 \partial_\tau^2(V(\varphi))dt\Big|_{\tau=0}
\end{split}
\]
It follows that the cross-curvature is given by
\[
\begin{split}
\mathcal C(u,v,w)& =-\frac{3}{2}\partial_s^2\partial_\tau^2 \int_0^1\frac{1}{2}|\partial_t\varphi|^2-V(\varphi)dt\Bigg|_{s=\tau=0}\\& =-\frac{3}{2}\partial_s^2\partial_\tau^2\left[ \frac{1}{2}|\partial_t\varphi|^2+V(\varphi)-2\int_0^1 V(\varphi)dt\right]\Bigg|_{s=\tau=0}
\\&=\frac{3}{2}\partial_s^2\left<u,\mathcal J^c(\sigma(s),u)\right>\Bigg|_{s=0}
\end{split}.
\]
\end{proof}

\section{A new example: the harmonic oscillator}
\label{S:HO}

In this section, we discuss the example in Theorem \ref{eg}. More precisely, we have the following.

\begin{thm}[Cross-curvature of the harmonic oscillator vanishes]\label{egdetail}
Let $L$ be the Lagrangian defined by
\[
L(x,v)=\frac{1}{2}|v|^2-\frac{1}{2}x\cdot Ax
\]
where $A$ is a symmetric matrix satisfying $A\leq 0$. Then the corresponding cost $c$ defined by (\ref{cost}) satisfies conditions {\Azero}, {\Aone}, and {\Atwo} on $\Real^n\times \Real^n$ (see Section \ref{background} for the definitions of the conditions). The cross-curvature for the cost $c$ is identically zero. In particular, $c$ satisfies condition \CCw, a fortiori \Athreew, on $\Real^n\times \Real^n$.
\end{thm}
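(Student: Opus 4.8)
The plan is to apply Theorem \ref{cross} directly, so the whole problem reduces to computing the Jacobi map $\mathcal{J}^c$ for this Lagrangian and checking that it is independent of the endpoint. First I would verify the structural conditions: since $V(x) = \frac12 x\cdot Ax$ is smooth and quadratic on all of $\Real^n$, the Euler--Lagrange equations are linear, $\ddot\gamma = A\gamma$, so least action paths exist and are unique between any two points (using $A\le 0$ to rule out conjugate points: the relevant operator $\ddtwo{t} - A$ has no nontrivial solution vanishing at both endpoints of $[0,T]$ because $A\le 0$ makes it, roughly, a positive operator). This gives {\Azero}, {\Aone}, {\Atwo} on all of $\Real^n\times\Real^n$; the details belong in the Appendix discussion but are routine for a bounded-below quadratic potential. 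The substantive part is the cross-curvature computation.

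For the Jacobi map: the Jacobi equation \eqref{Jacobi equation} here becomes $D_t^2 J + \Hess V_\varphi(J) = 0$, i.e.\ $\ddot J = A J$ in flat coordinates, because $\Real^n$ is flat so $R\equiv 0$ and $\Hess V = A$ is constant. Crucially this equation does not depend on the base path $\varphi$ at all. So the Jacobi field with $J(0)=u$, $J(1)=0$ is $J(t) = \Phi(t)\Phi(1)^{-1}$ applied appropriately --- concretely, writing the general solution of $\ddot J = AJ$ via the matrix $\cosh$ and $\sinh$ of $\sqrt{-A}\,t$ (or just as a fixed fundamental matrix $M(t)$), the boundary conditions force $J(0)=u$ and determine $\dot J(0)$ as a fixed linear map $L_A$ applied to $u$, where $L_A$ depends only on $A$. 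Hence $\mathcal{J}^c(y,u) = L_A u$ is \emph{independent of $y$}. Then in Theorem \ref{cross}, $\left<u, \mathcal{J}^c(\exp^c(v+sw), u)\right> = \left<u, L_A u\right>$ is constant in $s$, so $\partial_s^2$ of it vanishes, giving $\mathcal{C}\equiv 0$.

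Once $\mathcal{C}\equiv 0$ everywhere, conditions \CCw\ and (a fortiori, since $\mathcal S\subset TM\oplus\mathcal K\oplus T^*M$ and $MTW = \mathcal C|_{\mathcal S}$) \Athreew\ follow immediately from the definitions in Section \ref{background}.

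The main obstacle I anticipate is not the Jacobi map computation --- that is essentially a two-line linear ODE observation --- but rather being careful about two points: (i) checking the nonvanishing requirement $J(t)\neq 0$ on $(0,1)$ needed for $\mathcal{J}^c$ to be well-defined, which again reduces to the no-conjugate-points fact from $A\le 0$; and (ii) making sure the identification of $\mathcal{J}^c(y,u)$ with a $y$-independent linear map is legitimate, i.e.\ that for every $y = \exp^c(v+sw)$ the least action path from $x$ to $y$ genuinely exists and is unique so that the Jacobi map is defined there. Both of these are consequences of the linearity of the dynamics and $A\le 0$, so I would state them as a short lemma (or defer to the Appendix) and then present the Jacobi-map argument as the heart of the proof.
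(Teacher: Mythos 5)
Your proposal follows the same route as the paper: verify \Azero--\Atwo\ via unique minimizers and absence of conjugate points, observe that the Jacobi equation has constant coefficients so the Jacobi map $\mathcal J^c(y,u)$ is independent of $y$, and conclude $\mathcal C\equiv 0$ from Theorem~\ref{cross}. The structure and the key observation are exactly right.

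However, there is a sign error that is not cosmetic. With $V(x)=\tfrac12 x\cdot Ax$ we have $\nabla V=Ax$, so Newton's law (Theorem~\ref{potentialJacobi}) gives $\ddot\gamma=-A\gamma$, not $\ddot\gamma=A\gamma$; likewise $\Hess V=A$, so the Jacobi equation is $\ddot J+AJ=0$, i.e.\ $\ddot J=-AJ$, not $\ddot J=AJ$. This does not affect the heart of your argument --- either way the coefficients are constant, so $\mathcal J^c$ depends only on $u$ and the cross-curvature vanishes --- but it does break your no-conjugate-points justification as written. With your (incorrect) equation $\ddot J=AJ$ and $A\le 0$, the solutions in the negative eigenspaces of $A$ are trigonometric, so conjugate points \emph{can} occur: e.g.\ $A=-\pi^2 I$ gives $J(t)=\sin(\pi t)u$, a nontrivial solution vanishing at both endpoints. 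Your ``positivity of $\ddtwo{t}-A$'' heuristic is therefore the wrong operator. With the correct equation, the argument is: for a Dirichlet solution of $\ddot J+AJ=0$ on $[0,1]$, integrating $\langle \ddot J+AJ,J\rangle=0$ by parts yields $\int_0^1|\dot J|^2\,dt=\int_0^1\langle AJ,J\rangle\,dt\le 0$, forcing $J\equiv 0$; equivalently, $-\ddtwo{t}-A$ with Dirichlet boundary conditions is strictly positive because both summands are nonnegative and $-\ddtwo{t}$ is strictly positive. The paper establishes the same conclusion more explicitly, by diagonalizing $A$ and writing $\Phi_1$ in terms of $\cosh(\lambda_i t),\sinh(\lambda_i t)$ to see directly that $v\mapsto\Phi_1(x,v)$ is a diffeomorphism. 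Once the sign is corrected, your proof is complete and matches the paper's.
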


\begin{proof}
We first show that the corresponding cost given by (\ref{cost}) satisfies condition \Azero, \Aone, and \Atwo\ on $\Real^n\times\Real^n$. According to Theorem \ref{A012}, it is enough to check that any two points can be connected by a unique minimizers and there are no conjugate points.

The curves of least action $\gamma(\cdot)$ corresponding to the Lagrangian  $L(x,v)=\frac{1}{2}|v|^2-\frac{1}{2}x\cdot Ax$ satisfy
\[
 \partial_t^2\gamma(t)=-A\gamma(t)
\]
by Theorem \ref{potentialJacobi}.
Let $\hat e_1,...,\hat e_n$ be a basis of eigenvectors for the symmetric matrix $A$. Let
$x=\sum_{i=1}^nx^i\hat e_i$ and $v=\sum_{i=1}^nv^i\hat e_i$. If $\gamma(0)=x$ and $\dot\gamma(0)=v$, then $\gamma(\cdot)$ is given by
\[
 \gamma(t)=\sum_{i=1}^ng^i(t,x^i,v^i)\hat e_i,
\]
where $g^i(t,x^i,v^i)=\begin{cases}
                     x^i\cosh(\lambda_it)+\frac{v^i}{\lambda_i}\sinh(\lambda_it) & \hbox{if $\lambda_i\neq 0$}\\
            x^i+tv^i &\hbox{if $\lambda_i=0$}
                    \end{cases}$.

Note that the map
\[
v\longmapsto e^{1\cdot\vec H}(x,v)=\sum_{i=1}^ng^i(1,x^i,v^i)\hat e_i
\]
is a diffeomorphism from the tangent space $T_x\Real^n$ to $\Real^n$ for each $x$. Therefore,
given any two points, there is a unique path of least action joining them.
It also follows that there is no conjugate point.

Next, we show that the cross-curvature is identically zero. The Jacobi equation for this Lagrangian is given by Theorem \ref{potentialJacobi} to be
\[
\partial^2_t J+AJ=0.
\]

Note that the matrix $A$ is independent of time $t$. It follows that the Jacobi map $\mathcal J^c(y,u)$ depends only on $u$ but not on $y$. Thus Theorem \ref{cross} implies the cross-curvature is identically zero.
\end{proof}

\section{Perturbation by a Gentle Potential}

In this section, we consider the perturbed Lagrangian
\[
L_\eps(x,v)=\frac{1}{2}|v|^2-\eps V(x)
\]
defined on the trivial tangent bundle $T\Real^n$, where $V:\Real^n\longrightarrow\Real$ is a smooth function which is bounded above. We find conditions on $V$ for which the corresponding costs are positively cross-curved or satisfy the strong MTW condition for all small enough $\eps >0$.

The Hamiltonian (after identifying the tangent and cotangent bundle using the Euclidean metric) corresponding to the above Lagrangian is given by
\[
H_\eps(x,v)=\frac{1}{2}|v|^2+\eps V(x).
\]
Let $e^{t\vec H_\eps}$ be the Hamiltonian flow and let $\Phi^\eps_t$ be the map defined by
\[
\Phi^\eps_t(x,\alpha)=\pi(e^{t\vec H_\eps}(x,\alpha)).
\]

Suppose for each $\eps >0$ that $\mathcal U_\eps$ is an open subset of the tangent bundle $T\Real^n$ such that all elements in $\mathcal U_\eps$ are regular and outside the conjugate locus (see
Appendix \S \ref{appendix} for the definitions). The following theorem is a more precise version of Theorem \ref{goodperturb}.

\begin{thm}[Gentle potentials yielding positive cross-curvature]
\label{goodperturbprecise}
Let $\mathcal U$ be a bounded open subset of the tangent bundle $T\Real^n$ such that
\begin{equation}\label{goodperturbcondition}
\int_0^1\int_0^\tau\left<u,(1-t)\partial_s^2 \Hess\, V_{x+t(v+sw)}u\right>\Big|_{s=0}dtd\tau\geq C
\end{equation}
for some constant $C>0$, for all $(x,v)$ in the set $\mathcal U$ and for all unit tangent vectors $u,w$ in the tangent space $T_x\Real^n$. Then, the costs corresponding to the Lagrangians $L_\eps$ are positively cross-curved on the set \[
\mathcal K_\eps=\{(x,\Phi_1^\eps(x,\alpha))|(x,\alpha)\in\mathcal U_\eps\cap\mathcal U\}.
\]
for all small enough $\eps >0$.

If the condition (\ref{goodperturbcondition}) only holds under the assumption that $u$ and $w$ are orthogonal, then the costs satisfy the condition {\Athrees} on the set $\mathcal K_\eps$ for all small enough $\eps >0$.
\end{thm}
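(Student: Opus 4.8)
The plan is to combine the Jacobi-map formula of Theorem~\ref{cross} with a first-order perturbation expansion in $\eps$. When $\eps=0$ the Lagrangian $L_\eps$ is the free action, whose cross-curvature vanishes identically (the $A=0$ case of Theorem~\ref{egdetail}); so for $L_\eps$ the cross-curvature is $O(\eps)$, and the whole question reduces to computing the coefficient of $\eps$ and recognizing it as the left-hand side of \eqref{goodperturbcondition}.

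Fix $(x,v)\in\mathcal U_\eps\cap\mathcal U$ and unit vectors $u,w\in T_x\Real^n$. As in the proof of Theorem~\ref{cross}, let $t\mapsto\psi_s(t)$ be the path of least action for $L_\eps$ with $\psi_s(0)=x$ and $\dot\psi_s(0)=v+sw$; its endpoint is $\psi_s(1)=\exp^c(v+sw)=\Phi^\eps_1(x,v+sw)$, and let $J_s$ be the Jacobi field along $\psi_s$ with $J_s(0)=u$, $J_s(1)=0$. Since the connection on $\Real^n$ is flat, $\mathcal J^c(\exp^c(v+sw),u)=\dot J_s(0)$, and Theorem~\ref{cross} reads $\mathcal C(u,v,w)=\frac{3}{2}\partial_s^2\langle u,\dot J_s(0)\rangle|_{s=0}$. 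Expand $\psi_s(t)=x+t(v+sw)+O(\eps)$, and, using \eqref{Jacobi equation} in the form $\ddot J_s+\eps\,\Hess V_{\psi_s}(J_s)=0$, write $J_s=J^{(0)}_s+\eps J^{(1)}_s+O(\eps^2)$ with $J^{(0)}_s(0)=u$, $J^{(0)}_s(1)=0$, $J^{(1)}_s(0)=J^{(1)}_s(1)=0$. At order $\eps^0$, $\ddot J^{(0)}_s=0$ forces $J^{(0)}_s(t)=(1-t)u$, independent of $s$; hence $\langle u,\dot J^{(0)}_s(0)\rangle=-|u|^2$, which contributes nothing to $\partial_s^2|_{s=0}$. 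At order $\eps^1$, $\ddot J^{(1)}_s(t)=-(1-t)\Hess V_{x+t(v+sw)}(u)$; integrating twice and imposing $J^{(1)}_s(0)=J^{(1)}_s(1)=0$ gives
\[
\dot J^{(1)}_s(0)=\int_0^1\!\!\int_0^\tau (1-t)\,\Hess V_{x+t(v+sw)}(u)\,dt\,d\tau .
\]
Pairing with $u$ and differentiating twice in $s$ under the integral sign,
\[
\mathcal C(u,v,w)=\frac{3}{2}\,\eps\int_0^1\!\!\int_0^\tau\Big\langle u,\,(1-t)\,\partial_s^2\Hess V_{x+t(v+sw)}u\Big\rangle\Big|_{s=0}\,dt\,d\tau\;+\;O(\eps^2),
\]
whose leading coefficient is precisely the left side of \eqref{goodperturbcondition}, hence $\ge C$.

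To finish, note that $\mathcal C(u,v,w)$ is homogeneous of degree two separately in $u$ and in $w$: by Theorem~\ref{cross} and the linearity of $\mathcal J^c$ in its second argument, $\langle u,\mathcal J^c(\cdot,u)\rangle$ is quadratic in $u$, while replacing $w$ by $\lambda w$ rescales the $c$-segment derivative $\partial_s^2|_{s=0}$ by $\lambda^2$. So for nonzero $u,w$, $\mathcal C(u,v,w)\ge|u|^2|w|^2\big(\frac{3}{2}\eps C+O(\eps^2)\big)$. Since $\mathcal U$ is bounded, $V$ smooth, and on $\mathcal U_\eps$ the covectors are regular and outside the conjugate locus, the expansion above --- and in particular the $O(\eps^2)$ remainder --- is uniform over $(x,v)\in\mathcal U_\eps\cap\mathcal U$ and over unit $u,w$; thus there is $\eps_0>0$ such that for $0<\eps<\eps_0$ the remainder is dominated by $\frac{3}{2}\eps C$, whence $\mathcal C>0$ unless $u=0$ or $w=0$, i.e.\ condition \CCs\ holds on $\mathcal K_\eps$. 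If \eqref{goodperturbcondition} is assumed only for orthogonal $u,w$, the identical argument restricted to the set $\mathcal S$ --- where $\alpha_1(u)=\langle w,u\rangle=0$ --- shows the $MTW$ tensor is positive off $\{u=0\}\cup\{w=0\}$, which is exactly \Athrees.

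The main obstacle is the uniformity invoked in the last step: one must verify that the two-point Jacobi boundary value problem depends smoothly on $\eps$ and $s$ with remainder controlled uniformly over the bounded set $\mathcal U$ and over unit vectors. This amounts to controlling the least-action paths $\psi_s$ and their $\eps$- and $s$-dependence on a bounded region so that they stay in a fixed compact set, and to using that on $\mathcal U_\eps$ the relevant covectors are regular and off the conjugate locus, so that the Jacobi boundary value problem is uniformly solvable and $J_s$ never degenerates --- this is exactly where the hypotheses defining $\mathcal U_\eps$ are genuinely needed. The remaining ingredients are the routine bookkeeping of the $\eps$-expansion above together with the elementary homogeneity of the cross-curvature.
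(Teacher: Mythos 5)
Your proposal is correct and follows essentially the same route as the paper's own proof: both apply Theorem~\ref{cross}, expand the two-point Jacobi boundary value problem to first order in $\eps$ (your $J^{(1)}_s$ is the paper's $X=\partial_\eps J_\eps|_{\eps=0}$), integrate the linearized equation twice against the boundary conditions to extract $\dot J^{(1)}_s(0)$, and then invoke homogeneity in $u$ and $w$ together with uniformity of the $o(\eps)$ remainder over the bounded set to conclude \CCs, respectively \Athrees\ in the orthogonal case. The only stylistic difference is that you phrase the expansion as a power series and explicitly flag the vanishing of the zeroth-order term, whereas the paper differentiates in $\eps$ directly; the substance is identical.
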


\begin{proof}
Let $t\in [0,1]\longmapsto\gamma_\eps(t,s)\in M$ be curves of least action corresponding to the perturbed Lagrangian $L_\eps$ which satisfy the conditions $\gamma_\eps(0,s)=0$ and $\partial_t\gamma_\eps(0,s)=v+sw$. Let $J_\eps$ be the Jacobi field along the minimizer $t\longmapsto \gamma_\eps(t,s)$ which satisfies $J_\eps\Big|_{t=0}=u$ and $J_\eps\Big|_{t=1}=0$. It follows from Theorem \ref{potentialJacobi} that
\[
\partial_t^2J_\eps+\eps \Hess \,V_{\gamma_\eps}J_\eps=0.
\]

If we differentiate the above equation with respect to $\eps$ and let $X=\partial_\eps J_\eps\Big|_{\eps=0}$, then we have
\begin{equation}\label{perturbJacobi}
\partial_t^2X+\Hess \,V_{\gamma_0}J_0=0
\end{equation}
with boundary conditions $X\Big|_{t=0}=0$ and $X\Big|_{t=1}=0$.

The family of curves $\gamma_0$ is clearly given by
\[
 \gamma_0(t,s)=x+t(v+sw)
\]
and the field $J_0$ is given by
\[
 J_0(t)=(1-t)u.
\]

Therefore, if we integrate the differential equation (\ref{perturbJacobi}) and apply the boundary conditions, we get
\begin{equation}\label{perturbJacobisoln}
 \partial_tX\Big|_{t=0}=\int_0^1\int_0^\tau (1-t)\Hess\,V_{x+t(v+sw)}ud\tau dt.
\end{equation}
Here the integral signs denote componentwise integration.

Let $Z_\eps=Z_\eps(u,v,w)$ be a smooth function such that
\[
\begin{split}
&\partial_s^2\mathcal J^{c_\eps}(\exp^{c_\eps}(v+sw),u)\Big|_{s=0}\\&=\partial_s^2\mathcal J^{c_0}(\exp^{c_0}(v+sw),u)\Big|_{s=0}+\eps \partial_s^2\partial_tX(u,v,w)\Big|_{t=s=0}+Z_\eps(u,v,w),
\end{split}
\]
where $\lim\limits_{\eps\longrightarrow 0}\frac{Z_\eps}{\eps}=0$ uniformly on the bounded set $S$ defined by
\[
S:=\{(x,u,v,w)|u,v,w\in T_xM, (x,v)\in\mathcal U, |u|=|w|=1\}.
\]

Since $\partial_s^2\left<u,\mathcal J^{c_0}(\exp^{c_0}(v+sw),u)\right>\Big|_{s=0}=0$, we have
\[
\begin{split}
&\partial_s^2\left<u,\mathcal J^{c_\eps}(\exp^{c_\eps}(v+sw),u)\right>\Big|_{s=0} \\&=\eps\left<u, \partial_s^2\partial_tX(u,v,w)\right>\Big|_{t=s=0}+\left<u,Z_\eps(u,v,w)\right>.
\end{split}
\]

It follows from (\ref{perturbJacobisoln}) and the assumptions of the theorem that
\[
\partial_s^2\left<u,\mathcal J^{c_\eps}(\exp^{c_\eps}(v+sw),u)\right>\Big|_{s=0}\geq C'>0
\]
for some constant $C'$ and for all small enough $\eps>0$.

Finally, the term
\[
\partial_s^2\left<u,\mathcal J^{c_\eps}(\exp^{c_\eps}(v+sw),u)\right>\Big|_{s=0}
\]
is homogeneous of degree two in both the $u$ and $w$ variables. Therefore,
\[
\partial_s^2\left<u,\mathcal J^{c_\eps}(\exp^{c_\eps}(v+sw),u)\right>\Big|_{s=0}\geq C'|u|^2|w|^2
\]
holds on the set
\[
\{(x,u,v,w)|u,v,w\in T_xM, (x,v)\in\mathcal U\}.
\]

Combining Theorem \ref{cross} with Definition \ref{crosscurvature} and \ref{MTWcurvature} concludes the proof.
\end{proof}

\section{Radially Symmetric Potentials}

In this section, we simplify the condition in Theorem \ref{goodperturbprecise} by assuming that the potential $V$ is radially symmetric. More precisely, we have the following.

\begin{thm}[Some gentle radial potentials which yield \Athrees\ costs]
\label{symmetric}
Setting $V=f(|x|^2/2)$, let
$\mathcal U$ be a bounded open subset of the tangent bundle $T\Real^n$ such that
\[
f''\big({\textstyle \frac{|x+tv|^2}2}\big)\geq C>0,
\quad f'''\big({\textstyle \frac{|x+tv|^2}2}\big)\geq 0,
\quad f^{(4)}\big({\textstyle \frac{|x+tv|^2}2}\big)\geq 0,
\]
for all time $t$ in the interval $[0,1]$, for some constant $C>0$, for all $(x,v)$ in the
set $\mathcal U$, and for all unit tangent vectors $u,w$ orthogonal to each other
in the tangent space $T_x\Real^n$. Then, the costs corresponding to the Lagrangians $L_\eps$ satisfy the condition {\Athrees} on the set $\mathcal K_\eps$ for all small enough $\eps >0$.
\end{thm}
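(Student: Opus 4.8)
The plan is to reduce the hypothesis of Theorem~\ref{goodperturbprecise} to the three stated inequalities on $f$ by an explicit computation of the Hessian of a radial potential. First I would record that for $V(x)=f\big(\frac{|x|^2}{2}\big)$ one has $\nabla V_x=f'\big(\frac{|x|^2}{2}\big)x$, and hence
\[
\big\langle u,\Hess V_x\,u\big\rangle
=f'\big({\textstyle\frac{|x|^2}{2}}\big)|u|^2+f''\big({\textstyle\frac{|x|^2}{2}}\big)\langle x,u\rangle^2
\]
for every $u\in T_x\Real^n$. Substituting $x\mapsto x+t(v+sw)$ and writing $r_t:=x+tv$, $z_t:=\frac{|r_t|^2}{2}$, the $s$-dependence enters only through $z(s)=\frac{|x+t(v+sw)|^2}{2}$ and $a(s)=\langle x+t(v+sw),u\rangle$, whose $s$-derivatives at $s=0$ are $z'(0)=t\langle r_t,w\rangle$, $z''(0)=t^2|w|^2$, $a'(0)=t\langle w,u\rangle$, and $a''(0)=0$.

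Next I would carry out the chain rule for $\partial_s^2\big|_{s=0}$ of the quadratic form above. This produces a sum of terms, of which the only ones of indefinite sign are proportional to $f'''(z_t)\,z'(0)\,a(0)\,a'(0)$ and to $f''(z_t)\,a'(0)^2$; both carry a factor $\langle w,u\rangle$, so imposing $u\perp w$ annihilates them and leaves, for unit $u$,
\[
\partial_s^2\big\langle u,\Hess V_{x+t(v+sw)}u\big\rangle\Big|_{s=0}
=t^2\Big[f''(z_t)+f'''(z_t)\big(\langle r_t,u\rangle^2+\langle r_t,w\rangle^2\big)+f^{(4)}(z_t)\langle r_t,u\rangle^2\langle r_t,w\rangle^2\Big].
\]
Because $\mathcal U$ is bounded and $t\in[0,1]$, the argument $z_t$ ranges over a compact set on which the hypotheses $f''(z_t)\ge C>0$, $f'''(z_t)\ge 0$, $f^{(4)}(z_t)\ge 0$ are assumed to hold; every bracketed term after the first is then nonnegative, so the whole expression is $\ge C\,t^2$.

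Finally I would insert the weight $(1-t)$ and integrate:
\[
\int_0^1\int_0^\tau(1-t)\,\partial_s^2\big\langle u,\Hess V_{x+t(v+sw)}u\big\rangle\Big|_{s=0}\,dt\,d\tau
\ \ge\ C\int_0^1\int_0^\tau(1-t)t^2\,dt\,d\tau=\frac{C}{30}>0,
\]
which is precisely condition~\eqref{goodperturbcondition} in its orthogonal-$(u,w)$ form, with constant $C/30$. Theorem~\ref{goodperturbprecise} then delivers \Athrees\ for the costs $c_\eps$ on $\mathcal K_\eps$ for all small $\eps>0$. I expect the only genuine work to be the bookkeeping in the $\partial_s^2$ expansion --- in particular verifying that every term of indefinite sign contains the factor $\langle w,u\rangle$ and is hence killed by orthogonality; once that is checked, the sign analysis and the final elementary quadrature are routine.
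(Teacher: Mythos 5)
Your proposal is correct and follows the same route as the paper's own proof: compute $\Hess V$ for a radial potential, expand $\partial_s^2|_{s=0}$ via the chain rule, use $u\perp w$ to kill the cross terms, bound the resulting expression below by $Ct^2$ using the sign hypotheses on $f''$, $f'''$, $f^{(4)}$, and then invoke Theorem~\ref{goodperturbprecise}. One minor inaccuracy: the term you describe as of ``indefinite sign'' proportional to $f''(z_t)a'(0)^2$ is in fact nonnegative (since $f''>0$), but it still vanishes under $u\perp w$, so this does not affect the argument; the paper does not carry out your final quadrature explicitly (obtaining $C/30$), but the conclusion is identical.
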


\begin{proof}
Let $V(x)=f\big(\frac{|x|^2}{2}\big)$. A computation shows that the second derivatives are given by
\[
\partial_{x_i}\partial_{x_j}V=f'\big(\textstyle{\frac{|x|^2}{2}}\big)\delta_{ij}
+ f''\big({\textstyle \frac{|x|^2}{2}}\big)x_ix_j.
\]
Therefore, the Hessian in this case is given by
\[
\Hess \,V= f'\big(\textstyle{\frac{|x|^2}{2}}\big)I+ f''\big(\textstyle{\frac{|x|^2}{2}}\big)x\otimes x,
\]
where $x\otimes x$ denotes the linear transformation defined by $x\otimes x(y)=\left<x,y\right>x$.

Assume that $|u|=|w|=1$ and $\left<u,w\right>=0$. A computation shows that
\[
\begin{split}
&\left<u,\partial_s^2 \Hess\, V_{x+t(v+sw)}u\right>\Big|_{s=0}
\\&= t^2 f''\big(\textstyle{\frac{|x+tv|^2}{2}}\big)
+ [\left<x+tv,tw\right>^2 +\left<x+tv,tu\right>^2] f'''\big(\textstyle{\frac{|x+tv|^2}{2}}\big)+
\\&\quad +\left<x+tv,tw\right>^2\left<x+tv,u\right>^2 f^{(4)}\big(\textstyle{\frac{|x+tv|^2}{2}}\big)
\\&\geq t^2C.
\end{split}
\]
Therefore, Theorem \ref{goodperturbprecise} applies and the result follows.
\end{proof}

\begin{rem}
It is clear that the condition $f''\geq C$ implies that $f$ is not bounded above. It follows that there is no function which satisfies the conditions in Theorem \ref{symmetric} everywhere.
\end{rem}

\begin{ex}[The anharmonic oscillator with a quartic potential well]
Let $f$ be a function such that $f(s)=s^2$ on $[0,K]$ which stays bounded above on $[0,\infty[$.
Then it is clear that $f$ satisfies the conditions in Theorem \ref{symmetric} on the set
\[
\mathcal U=\{(x,v) \mid |x|+|v|<K\}.
\]
\end{ex}

\section{Appendix: Hamiltonian Mechanics on Manifolds}
\label{appendix}

This appendix is devoted to a discussion of background material
on Hamiltonian mechanics, and properties of the
transportation cost functions which arise by minimizing a Lagrangian action,
which supply some perspective on the results above.
Although its contents may be familiar to experts,
we include it for the readers' convenience,
since we are not aware of a suitable reference summarizing this material in the literature.

Let $L:TM\longrightarrow\Real$ be a smooth function called Lagrangian.
We define the corresponding cost functions $c_t$ by
\begin{equation}\label{Lagrange}
c_t(x,y)=\inf\int_0^tL(\gamma(s),\dot\gamma(s))ds
\end{equation}
where the infimum is taken over all smooth curves $\gamma(\cdot)$ joining $x$ to $y$ (i.e. $\gamma(0)=x$ and $\gamma(t)=y$).

The minimization problem (\ref{Lagrange}) which defined the cost functions $c_t$ above is called
Hamilton's principle of least action. In order to characterize its minimizers --- called
paths of least action --- we first consider the Legendre transform
$H:T^*M\longrightarrow\Real$ of the Lagrangian $L$ defined by
\[
H(x,\alpha)=\sup_{v\in T_xM}[\alpha(v)-L(x,v)]
\]
where the supremum is taken over all tangent vectors $v$ in the tangent space $T_xM$ at the point $x$. The function $H$ is called the Hamiltonian corresponding to the Lagrangian $L$.

Let $q^1,...,q^n,p_1,...,p_n$ be the canonical local coordinates of the cotangent bundle $T^*M$. The Hamiltonian vector field $\vec H$ of the Hamiltonian $H$ is defined via the above local coordinates by
\[
\vec H:=\left(\frac{\partial H}{\partial p_1},...,\frac{\partial H}{\partial p_n}, -\frac{\partial H}{\partial q^1},...,-\frac{\partial H}{\partial q^n}\right).
\]
The following theorem is classical (see, for instance, \cite{Fa}).

\begin{thm}[Existence of least action paths]\label{Tonelli}
Let $\left<\cdot,\cdot\right>$ be a Riemannian metric and let $|\cdot|$ be the corresponding norm. Assume that the Lagrangian $L$ satisfies the following conditions:
\begin{enumerate}
\item the restriction $L\big|_{T_xM}$ of the Lagrangian $L$ to each tangent space $T_xM$ has a positive definite Hessian,
\item $L$ satisfies $L(x,v)\geq |v|+C_1$ for all tangent vectors $v$ and for some constant $C_1$
\item  for each compact set $K\subseteq M$, and constant $C_2\geq 0$, there is a constant $C_3$ such that $L(x,v)$ satisfies $L(x,v)\geq C_2|v|+C_3$ for all tangent vector $v$ in the tangent space $T_xM$ and all $x$ in $K$.
\end{enumerate}
Then, given any time $t>0$ and any pair of points $x$ and $y$, there exists a curve of the form $s\longmapsto \gamma(s):=\pi(e^{s\vec H}(x,\alpha))$ such that $\gamma(0)=x$, $\gamma(t)=y$, and $\gamma(\cdot)$ achieves the infimum in (\ref{Lagrange}).
\end{thm}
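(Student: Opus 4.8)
The plan is to run the classical direct method of the calculus of variations, then pass from minimizers to Hamiltonian trajectories via Tonelli's regularity theory and the Legendre transform. Fix $x,y\in M$ and $t>0$ and consider the action $\mathcal{A}(\gamma)=\int_0^t L(\gamma(s),\dot\gamma(s))\,ds$ over absolutely continuous curves $\gamma$ joining $x$ to $y$ in time $t$. Hypothesis (2) gives $\mathcal{A}(\gamma)\geq \mathrm{length}(\gamma)+C_1 t\geq C_1 t$, so $\mathcal{A}$ is bounded below; since there is at least one smooth competitor of finite action, $I:=\inf\mathcal{A}$ is a finite real number, and I would pick a minimizing sequence $\gamma_k$ with $\mathcal{A}(\gamma_k)\leq I+1$. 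The same inequality bounds $\int_0^t|\dot\gamma_k|\,ds\leq I+1-C_1t$, hence the lengths and so the images $\gamma_k([0,t])$ all lie in a fixed compact set $K$. On $K$, hypothesis (3) says $L(x,v)\geq\theta(|v|)$ for the convex superlinear function $\theta(r):=\sup_{C_2\geq 0}\big(C_2 r+C_3(C_2,K)\big)$, so $\int_0^t\theta(|\dot\gamma_k|)\,ds\leq I+1$. By the de la Vall\'ee--Poussin criterion the velocities $\{\dot\gamma_k\}$ are uniformly integrable, so --- working in finitely many charts covering $K$, or through an isometric embedding of $M$ --- a subsequence has $\dot\gamma_k\rightharpoonup\dot\gamma$ weakly in $L^1$ and $\gamma_k\to\gamma$ uniformly, with $\gamma$ absolutely continuous from $x$ to $y$.

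Next I would apply Tonelli's lower semicontinuity theorem: since $L$ is convex in $v$ by hypothesis (1), the action $\mathcal{A}$ is sequentially lower semicontinuous for this convergence, whence $\mathcal{A}(\gamma)\leq\liminf_k\mathcal{A}(\gamma_k)=I$ and $\gamma$ minimizes $\mathcal{A}$ among absolutely continuous competitors. To conclude, I would invoke Tonelli's regularity theorem (see \cite{Fa}): hypotheses (1)--(3) make $L$ a Tonelli Lagrangian, and minimizers of such Lagrangians are $C^1$, satisfy the Euler--Lagrange equations, and are hence $C^\infty$ by the standard bootstrap. Finally, hypotheses (1) and (3) make the fiberwise Legendre transform $\mathcal{L}\colon(x,v)\mapsto(x,\partial_v L(x,v))$ a diffeomorphism $TM\to T^*M$ that conjugates the Euler--Lagrange flow to the Hamiltonian flow $e^{s\vec H}$; putting $\alpha:=\partial_v L(x,\dot\gamma(0))$ then yields $\gamma(s)=\pi(e^{s\vec H}(x,\alpha))$, which is the asserted form.

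The step I expect to be the main obstacle is the regularity claim. The curve produced by the direct method is a priori only absolutely continuous, and upgrading it to a $C^1$ --- hence smooth --- solution of the Euler--Lagrange equations is precisely the delicate content of Tonelli's theory (one must control the a priori estimates along the minimizer itself and rule out velocity blow-up), so here I would lean on \cite{Fa} rather than reprove it. A secondary technicality is carrying out the compactness and lower-semicontinuity arguments on a manifold rather than in a single Euclidean chart; hypothesis (3) is exactly what supplies the superlinear bound needed for uniform integrability of the velocities, while hypothesis (2) both bounds the action below and confines the minimizing sequence to a compact region.
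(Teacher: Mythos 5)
The paper does not prove this theorem --- it is stated as classical and deferred to Fathi \cite{Fa} --- and your sketch is precisely the standard Tonelli argument (direct method plus de la Vall\'ee--Poussin, lower semicontinuity from fiberwise convexity, Tonelli regularity, Legendre conjugation of the Euler--Lagrange flow to $e^{s\vec H}$) that the cited reference supplies. The one implicit ingredient worth naming explicitly is metric completeness: hypothesis (2) bounds arclength of the minimizing sequence, but passing from a uniform arclength bound to a compact set containing all the $\gamma_k([0,t])$ uses Hopf--Rinow, which is a standing hypothesis in this circle of results even though it does not appear in the theorem statement.
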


Let us consider the optimal transportation problem with cost function given by $c_1$ defined in (\ref{Lagrange}). Under the assumptions of Theorem \ref{Tonelli} and that the initial measure $\mu$ is absolutely continuous with respect to the Lebesgue measure, it is known that there is a unique solution to the optimal transportation problem (see \cite{BeBu,FaFi}).

Let $\pi:T^*M\longrightarrow M$ be the natural projection $\pi(x,\alpha)=x$ and let $\Phi_t:T^*M\longrightarrow M$ be the map defined by
\[
\Phi_t(x,\alpha)=\pi(e^{t\vec H}(x,\alpha)).
\]
Note that the curves of least action take the form
$t\in [0,1]\longmapsto \Phi_t(x,\alpha)$ by Theorem \ref{Tonelli}.
In the Riemannian case, these extend to geodesics which are not necessarily
length minimizing and the map $\Phi_1$ is the Riemannian exponential map.

\begin{defn}[Regular covector]
A covector $(x,\alpha)$ in the cotangent space $T_x^*M$ is regular if
$t\in [0,1]\longmapsto \Phi_t(x,\alpha)$ is a unique path of least action between its endpoints.
\end{defn}

\begin{defn}[Conjugate locus]
A covector $(x,\alpha)$ is in the conjugate locus if the map $\Phi_1(x,\cdot)$ does not have full rank at $\alpha$.
\end{defn}

The conjugate locus can be characterized using Jacobi field as in Riemannian geometry. For this, let $\sigma(\cdot)$ be a curve in the cotangent bundle $T^*M$. The vector fields $J$ of the form $J(t)=\partial_s\Phi_t(\sigma(s))\Big|_{s=0}$ defined along the curve $t\longmapsto\Phi_t(\sigma(0))$ are called Jacobi fields.

\begin{thm}[Characterizing the conjugate locus with Jacobi fields]
The covector $(x,\alpha)$ is contained in the conjugate locus if and only if there is a Jacobi field defined along the curve $t\longmapsto \Phi_t(x,\alpha)$ which vanishes at the endpoints (i.e. $J|_{t=0}=0=J|_{t=1}$).
\end{thm}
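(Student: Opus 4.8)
The plan is to prove both implications by matching Jacobi fields that vanish at $t=0$ with \emph{vertical} one-parameter deformations of the covector, i.e.\ curves $\sigma(s)=(x,\alpha+s\beta)$ with $\beta\in T_x^*M$. Two elementary observations drive the argument. First, $\Phi_0(x,\alpha)=\pi(x,\alpha)=x$ does not depend on $\alpha$, so the Jacobi field $J(t):=\partial_s\Phi_t(\sigma(s))\big|_{s=0}$ generated by a vertical $\sigma$ automatically satisfies $J(0)=0$. Second, straight from the definition of the differential, $J(1)=\partial_s\Phi_1(x,\alpha+s\beta)\big|_{s=0}=d_\alpha\Phi_1(x,\cdot)(\beta)$. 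Hence such a $J$ vanishes at \emph{both} endpoints precisely when $\beta\in\ker d_\alpha\Phi_1(x,\cdot)$.

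Next I would check that the vertical variations generate all Jacobi fields with $J(0)=0$. A Jacobi field along $t\mapsto\Phi_t(x,\alpha)$ is obtained by applying $d\pi$ to a solution of the linearization of $\dot\sigma=\vec H(\sigma)$ along $t\mapsto e^{t\vec H}(x,\alpha)$, and such a solution --- hence the Jacobi field itself --- depends only on its initial value $\dot\sigma(0)\in T_{(x,\alpha)}(T^*M)$. Writing $\dot\sigma(0)=(\xi,\eta)\in T_xM\oplus T_x^*M$, one finds $J(0)=\xi$ and, using Hamilton's equation $\dot x=\partial_pH$, $D_tJ(0)=\partial_{xp}H(x,\alpha)\,\xi+\partial_{pp}H(x,\alpha)\,\eta$. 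Since $\partial_{pp}H(x,\alpha)$ is positive definite --- it is the inverse of the fiberwise Hessian of $L$, which is positive definite by the first hypothesis of Theorem \ref{Tonelli} --- the map $(\xi,\eta)\mapsto(J(0),D_tJ(0))$ is a linear isomorphism $T_xM\oplus T_x^*M\to T_xM\oplus T_xM$. In particular $J(0)=0$ forces $\xi=0$, so $\dot\sigma(0)=(0,\eta)$ is the initial velocity of the vertical curve $s\mapsto(x,\alpha+s\eta)$ and, by the uniqueness just invoked, $J$ coincides with the field that curve generates; moreover $J\not\equiv0$ if and only if $(J(0),D_tJ(0))\neq0$, that is, if and only if $\eta\neq0$.

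Putting the two steps together: a nontrivial Jacobi field along $t\mapsto\Phi_t(x,\alpha)$ with $J|_{t=0}=0=J|_{t=1}$ exists if and only if $\ker d_\alpha\Phi_1(x,\cdot)$ contains a nonzero vector, i.e.\ if and only if $\Phi_1(x,\cdot)$ fails to have full rank at $\alpha$ --- which is precisely the condition that $(x,\alpha)$ lie in the conjugate locus. The step I expect to demand the most care is the reduction in the second paragraph: one must justify that vertical deformations suffice, which rests on invertibility of the Legendre transform (positive definiteness of $\partial_{pp}H$), and one must set up the linearized-flow and ODE-uniqueness argument cleanly on a manifold, keeping track of the covariant derivative $D_t$ and handling the trivial case $J\equiv0$ in accordance with the intended reading of ``there is a Jacobi field.'' Everything else is bookkeeping with the flow $\Phi_t$ and its differential.
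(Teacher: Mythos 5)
Your proof is correct and follows essentially the same route as the paper: both match Jacobi fields vanishing at $t=0$ with vertical (fiberwise) deformations of the initial covector, so that vanishing at $t=1$ becomes rank deficiency of $d_\alpha\Phi_1(x,\cdot)$. The extra work you do --- using ODE uniqueness for the linearized flow and positive-definiteness of $\partial_{pp}H$ to show $\dot\sigma(0)\mapsto(J(0),D_tJ(0))$ is an isomorphism --- carefully fills in the step the paper leaves implicit, namely why $J(0)=0$ lets one take the generating curve inside $T_x^*M$ and why the resulting Jacobi field is nontrivial exactly when the vertical component $\eta$ is nonzero.
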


\begin{proof}
The covector $(x,\alpha)$ is contained in the conjugate locus if and only if there is a tangent vector $v$ in the tangent space $T_\alpha T^*_xM$ based at the point $\alpha$ such that $\partial_\alpha\Phi_1(v)=0$. Let $s\longmapsto\sigma(s)$ be a curve in $T^*_xM$ such that $\sigma'(0)=v$, then $\Phi_t(\sigma(s))$ defines a family of minimizers and $J(t):=\partial_s\Phi_t(\sigma(s))\Big|_{s=0}$ is a Jacobi field which vanishes at the endpoints. Conversely, let $J(\cdot)$ be a Jacobi field which vanishes at the endpoints, then there is a curve $\sigma(\cdot)$ in the cotangent bundle such that $J(t)=\partial_s\Phi_t(\sigma(s))\Big|_{s=0}$. The vanishing of $J(0)$ implies that $\sigma(\cdot)$ can be chosen to be in one cotangent space $T_x^*M$. The vanishing of $J(1)$ implies that the differential of the map $\Phi_1\Big|_{T_x^*M}$ sends $\sigma'(0)$ to 0.
\end{proof}

Let $\mathcal U$ be an open subset of the cotangent bundle $T^*M$ such that all elements in $\mathcal U$ are regular and not on the conjugate locus. Let $\mathcal O=\mathcal M\times\mathcal N$ be an open subset of the product $M\times M$ contained in
\[
\{(x,\Phi_1(x,\alpha))|(x,\alpha)\in\mathcal U\}.
\]

\begin{thm}[Action minimizing transportation costs]\label{A012}
The cost $c=c_1$ defined in (\ref{Lagrange}) satisfies the conditions {\Azero}, {\Aone}, and {\Atwo} on the set $\mathcal O$.
\end{thm}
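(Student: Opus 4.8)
The plan is to realize the cost $c=c_1$ as the pullback of the Lagrangian action under the Hamiltonian endpoint map, and then to read off all three conditions from the inverse function theorem. Set $E:\mathcal U\to M\times M$, $E(x,\alpha)=(x,\Phi_1(x,\alpha))$. The first and principal task is to prove that $E$ is a diffeomorphism from $\mathcal U$ onto an open subset $\mathcal W$ of $M\times M$ with $\mathcal O\subseteq\mathcal W$. Smoothness of $E$ holds because $H$, being the Legendre transform of the fibrewise strictly convex smooth Lagrangian $L$, is smooth, so its Hamiltonian flow --- hence $\Phi_1$ --- is smooth. In adapted coordinates the differential of $E$ is block lower-triangular with diagonal blocks $I$ and $\partial_\alpha\Phi_1(x,\alpha)$, and the second block is invertible exactly because $(x,\alpha)$ lies off the conjugate locus; thus $E$ is a local diffeomorphism. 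For injectivity of $E$ on $\mathcal U$ I would argue as follows: if $E(x,\alpha)=E(x,\alpha')$ with common value $(x,y)$, then, using Theorem \ref{Tonelli} for existence, both $t\mapsto\Phi_t(x,\alpha)$ and $t\mapsto\Phi_t(x,\alpha')$ are paths of least action from $x$ to $y$, so they coincide by the uniqueness built into the notion of a regular covector, whence $\alpha=\alpha'$. An injective local diffeomorphism is a diffeomorphism onto its (open) image; write $E^{-1}(x,y)=(x,\mathcal A(x,y))$ with $\mathcal A:\mathcal W\to T^*M$ smooth. The inclusion $\mathcal O\subseteq\mathcal W$ is forced by the very definition of $\mathcal O$.

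Granting this, \Azero\ is immediate: for $(x,y)\in\mathcal W$ the unique path of least action joining $x$ to $y$ is $t\mapsto\Phi_t(x,\mathcal A(x,y))$, so
\[
c(x,y)=\int_0^1 L(\Phi_t(x,\mathcal A(x,y)),\partial_t\Phi_t(x,\mathcal A(x,y)))\,dt,
\]
which exhibits $c$ as a composition of smooth maps followed by integration over the compact interval $[0,1]$; hence $c\in C^\infty(\mathcal W)\subseteq C^4(\mathcal O)$. I would then invoke the classical first-variation formula for the action along a minimizer --- obtained by differentiating the displayed integral with respect to an endpoint and integrating by parts against the Euler--Lagrange equation, exactly the computation that appears inside the proof of Theorem \ref{cross} (see also \cite{Fa}) --- to conclude that $d_xc(x,y)=-\mathcal A(x,y)$ on $\mathcal W$ (symmetrically, $d_yc(x,y)$ is the terminal momentum of the minimizer, which will not be needed).

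The remaining two conditions then drop out of the diffeomorphism property. For \Aone, fix $x\in\mathcal M$: the partial map $y\mapsto\mathcal A(x,y)$ is the inverse of $\alpha\mapsto\Phi_1(x,\alpha)$ on the open set $\{\,y:(x,y)\in\mathcal W\,\}\supseteq\mathcal N$, hence a diffeomorphism onto an open subset of $T_x^*M$ and in particular injective, so $y\mapsto d_xc(x,y)=-\mathcal A(x,y)$ is injective on $\mathcal N$. For \Atwo, differentiate $d_xc(x,\cdot)=-\mathcal A(x,\cdot)$ in $y$: by symmetry of the mixed second derivatives of the $C^4$ function $c$, the bilinear form $d_xd_yc(x,y)$ equals, up to sign, the Jacobian $\partial_y\mathcal A(x,y)$ of the diffeomorphism $y\mapsto\mathcal A(x,y)$ --- equivalently, by the inverse function theorem, the inverse of $\partial_\alpha\Phi_1(x,\mathcal A(x,y))$, which has full rank off the conjugate locus. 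Being a linear isomorphism $T_yM\to T_x^*M$ it is injective, which is exactly \Atwo.

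I expect the genuine work to be concentrated in the first step and the first-variation identity, and in keeping the two hypotheses on $\mathcal U$ straight: ``off the conjugate locus'' supplies only the \emph{local} invertibility of $\partial_\alpha\Phi_1$, whereas it is precisely ``regular'' (uniqueness of minimizers) that upgrades $E$ from a local to a \emph{global} diffeomorphism, so that $\mathcal A$ --- and hence $d_xc$ --- is defined on all of $\mathcal O$; dropping this, (A1) could fail. Getting the sign in the first-variation formula right, and recording that the minimizer depends smoothly on its endpoints (which is nothing but the smoothness of $\mathcal A$), are the points that need care; the rest is routine.
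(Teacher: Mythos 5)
Your argument follows the same overall architecture as the paper's --- realize $c$ near $(x,y)\in\mathcal O$ as the smooth action integral along $t\mapsto\Phi_t(x,\mathcal A(x,y))$, where $\mathcal A$ is the inverse of the endpoint map supplied by the inverse function theorem off the conjugate locus, then reduce \Aone\ and \Atwo\ to the identity $d_xc(x,y)=-\mathcal A(x,y)$. The substantive difference lies in how that identity is established. You invoke the classical first-variation formula for the action along a minimizer, i.e.\ differentiate the smooth integral representation of $c$ in the $x$-endpoint and integrate by parts against the Euler--Lagrange equation. The paper instead introduces the Bolza-type value problem \eqref{Bolza} for the time-reversed Lagrangian, observes its minimum is zero exactly at reversed least-action curves, and appeals to \cite[Theorem 2.3]{AgLe} (or \cite{CaSi}) to get the momentum--differential correspondence $e^{1\cdot\vec H_r}(\alpha)=d_xc(\gamma(1),y)$, from which $\Phi_1(-d_xc(x,y))=y$ follows by time reversal. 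Your route is more streamlined and self-contained given that \Azero\ has already been secured (so differentiation under the integral is licensed), whereas the paper's route leans on an external control-theoretic result but sidesteps the explicit first-variation computation; both land on the same characterization of $(d_xc)^{-1}$. The one place you leave the reader to trust the folklore is the sign and precise form of the first-variation formula for a natural mechanical Lagrangian --- you flag this yourself, and it is indeed the only nonroutine computation hidden in your sketch. Your treatment of injectivity of $E$ via regularity of covectors, and of \Atwo\ via symmetry of mixed partials plus inverse function theorem, is correct and matches the paper's intent.
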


\begin{proof}
 We first prove that the function $c$ is smooth on the set $\mathcal O$. Let $(x,\alpha)$ be a point in the cotangent bundle $T^*M$ such that $t\longmapsto \Phi_t(x,\alpha)$ is uniquely minimizing and $\Phi_1(x,\alpha)=y$. Since the differential of the map $\Phi_1$ is of full rank on $\mathcal U$, there is a neighborhood $U_1\times U_2$ in the product $M\times M$ and a map $\Psi:U_1\times U_2\to T^*M$ such that $\Psi(x,y)=\alpha$, $\Psi(x',y')=x'$, and $\Phi_1(x',\Psi(x',y'))=y'$ for all $(x',y')$ in the set $U_1\times U_2$. By shrinking the set $U_1\times U_2$, we can assume that the image $\Psi(U_1\times U_2)$ is contained in $\mathcal U$. It follows that
\[
c(x',y')=\int_0^1L(\Phi_t(x',\Psi(x',y')),\partial_t\Phi_t(x',\Psi(x',y')))dt
\]
for all pairs $(x',y')$ in the set $U_1\times U_2$. It follows immediately that the cost $c$ is smooth.

For the proof of condition \Aone, consider the following minimization problem:
Find a curve which minimizes the following expression among all smooth curves $\gamma(\cdot)$ starting at $y$
\begin{equation}\label{Bolza}
 -c(\gamma(1),y) + \int_0^1L(\gamma(s),-\dot\gamma(s))ds.
\end{equation}

Clearly, the above functional is non-negative and it is zero if and only if
$s\longmapsto \gamma(1-s)$ is curve of least action connecting $\gamma(1)$ to $y$
for (\ref{Lagrange}) with Lagrangian $L$.
Let $\gamma(\cdot)$ be such a minimizer with $\gamma(0)=x$. Note that the Hamiltonian corresponding to the Lagrangian $(x,v)\longmapsto L(x,-v)$ in (\ref{Bolza}) is given by $H_r(x,\alpha):=H(x,-\alpha)$. Therefore, by \cite[Theorem 2.3]{AgLe} (see also \cite{CaSi}), there exists a covector $\alpha$ in the cotangent space $T_y^*M$ such that $\gamma(t)=\pi(e^{t\vec H_r}(\alpha))$ and $e^{1\cdot\vec H_r}(\alpha)=d_x c(\gamma(1),y)$. Finally if we let $\tilde\gamma(t)=\gamma(1-t)$, $\alpha(t)=e^{t\vec H}(\alpha)$, and $\tilde\alpha(t)=-\alpha(1-t)$, then they satisfy
\[
\pi(e^{t\vec H}(-d_xc(x,y)))=\pi(e^{t\vec H}(\tilde\alpha(0)))=\tilde\gamma(t)
\]
and $\tilde\gamma(\cdot)$ is a curve minimizing the action (\ref{Lagrange}) between its endpoints.
In particular, we have
\[
\Phi_1(-d_xc(x,y)))=\pi(e^{1\cdot\vec H}(-d_xc(x,y)))=\tilde\gamma(1)
\]

Therefore, if $(x,y)$ is contained in the set $\mathcal O$, then the above equation shows that $d_xc$ is injective as a map from $\mathcal N$ to $T^*_xM$ and $(d_xc)^{-1}(\alpha)=\Phi_1(-\alpha)$.

The condition \Atwo\ follows from the above characterization of $(d_xc)^{-1}$ and inverse function theorem.
\end{proof}

In this paper, we focus on Lagrangian arising from natural mechanical systems. More precisely, let $\left<\cdot,\cdot\right>$ be a Riemannian metric defined on the manifold $M$ and let $|\cdot|$ be the corresponding norm. Let $V:M\longrightarrow\Real$ be a smooth function on the manifold $M$ called a potential. Natural mechanical Lagrangians are Lagrangians of the form
\begin{equation}\label{potential}
L(x,v)=\frac{1}{2}|v|^2-V(x).
\end{equation}

In order to apply Theorem \ref{Tonelli}, we assume that the potential $V$ is bounded above. Let $\gamma(\cdot)$ be a curve of least action corresponding to the above Lagrangian (\ref{potential}) and let $D_t$ be the covariant derivative along the curve $\gamma(\cdot)$ with respect to the given Riemannian metric $\left<\cdot,\cdot\right>$. Let $R$ be the Riemannian curvature tensor. Then the minimizer $\gamma(\cdot)$ and the Jacobi fields $J(\cdot)$, in this case, satisfy the following equations.

\begin{thm}[Mechanical geodesics and Jacobi equations]
\label{potentialJacobi}
The curves of least action $\gamma(\cdot)$ corresponding to natural mechanical Lagrangian defined in (\ref{potential}) satisfy the Newton's second law
\[
D_t\partial_t\gamma=-\nabla V_\gamma.
\]
A Jacobi field $J(\cdot)$ along a least action curve $\gamma(\cdot)$ satisfies the following equation:
\[
D_t^2J+\Hess\,V_{\gamma}(J)+R(\dot\gamma,J)\dot\gamma=0.
\]
\end{thm}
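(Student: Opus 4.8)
The plan is to prove the two assertions by the standard variational and curvature computations; the only genuine input is smooth dependence of least-action curves on their endpoints, which is supplied by Theorem~\ref{Tonelli}, since it exhibits minimizers as $t\mapsto\pi(e^{t\vec H}(x,\alpha))$ for the smooth Hamiltonian flow $e^{t\vec H}$. Consequently every two-parameter family of minimizers used below is smooth in both parameters, and the interchanges of $D_t$, $D_\tau$ and $\int$ that the argument requires are legitimate.

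\textbf{Newton's second law.} First I would derive the Euler--Lagrange equation by the first variation of the action. Fix a curve of least action $\gamma$ on $[0,T]$ and a smooth variation $\varphi(t,\tau)$ with $\varphi(\cdot,0)=\gamma$ and fixed endpoints, and set $W:=\partial_\tau\varphi|_{\tau=0}$. Differentiating $\int_0^T\big(\tfrac12|\partial_t\varphi|^2-V(\varphi)\big)\,dt$ in $\tau$ at $\tau=0$, and using the metric compatibility of $D_t$, the torsion-free identity $D_\tau\partial_t\varphi=D_t\partial_\tau\varphi$, and $\partial_\tau(V\circ\varphi)=\langle\nabla V_\varphi,\partial_\tau\varphi\rangle$, one obtains $\int_0^T\big(\langle D_tW,\partial_t\gamma\rangle-\langle\nabla V_\gamma,W\rangle\big)\,dt$. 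Integrating the first summand by parts and dropping the boundary term (endpoints are held fixed) rewrites this as $-\int_0^T\langle W,\,D_t\partial_t\gamma+\nabla V_\gamma\rangle\,dt$. Since $\gamma$ minimizes, this vanishes for every admissible variation field $W$; choosing $W$ compactly supported in $(0,T)$ and invoking the fundamental lemma of the calculus of variations gives $D_t\partial_t\gamma=-\nabla V_\gamma$.

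\textbf{The Jacobi equation.} A Jacobi field $J$ along $\gamma$ is, by the definition used in this paper, the variation field $J=\partial_\tau\varphi|_{\tau=0}$ of a family $t\mapsto\varphi(t,\tau)$ of curves of least action with $\varphi(\cdot,0)=\gamma$. I would differentiate Newton's law $D_t\partial_t\varphi=-\nabla V_\varphi$ — valid for every $\tau$ — with respect to $\tau$ at $\tau=0$. On the left, the commutation rule for the covariant derivatives of the two-parameter map $\varphi$ introduces the Riemann curvature: applied to $\partial_t\varphi$, and combined with the symmetry $D_\tau\partial_t\varphi=D_t\partial_\tau\varphi=D_tJ$, it gives $D_\tau(D_t\partial_t\varphi)=D_t^2J+R(\dot\gamma,J)\dot\gamma$ in the sign convention of~\eqref{Jacobi equation}. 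On the right, $D_\tau(\nabla V\circ\varphi)=\nabla_{\partial_\tau\varphi}\nabla V=\nabla_J\nabla V=\Hess V_\gamma(J)$, where $\Hess V$ is the $(1,1)$-tensor $Y\mapsto\nabla_Y\nabla V$. Equating the two sides yields $D_t^2J+\Hess V_\gamma(J)+R(\dot\gamma,J)\dot\gamma=0$.

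\textbf{Main obstacle.} There is no deep obstacle here; this is a classical computation. The only points demanding care are (i) the regularity needed to differentiate the two-parameter families and to commute $D_t$ with $D_\tau$ and with $\int$, which is handled once and for all by Theorem~\ref{Tonelli} as noted above, and (ii) consistent bookkeeping of the curvature sign convention so that the final equation matches~\eqref{Jacobi equation} — and degenerates, when $V=\tfrac12\,x\cdot Ax$ on Euclidean space, to $\partial_t^2J+AJ=0$ as used in Section~\ref{S:HO}. I would also note that presenting $J$ as a variation field is not a restriction, since this is precisely how Jacobi fields are defined throughout the paper, so no converse statement is required.
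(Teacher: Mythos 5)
Your proof is correct and follows essentially the same route as the paper: Newton's second law by first variation of the action (metric compatibility, symmetry of $D_t$ and $D_\tau$, integration by parts, fundamental lemma), and the Jacobi equation by differentiating Newton's law along a one-parameter family of minimizers and commuting covariant derivatives to produce the Riemann curvature term. The paper omits the explicit invocation of the fundamental lemma and the boundary-term discussion that you include, but the computations are the same.
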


\begin{proof}
Let $\gamma_\eps(\cdot)$ be a family of smooth curves such that $\gamma_0(\cdot)$ is a curve of least action, $\gamma_\eps(0)=x$, and $\gamma_\eps(1)=y$. Then
\[
\begin{split}
0&=\frac{d}{d\eps}\int_0^1\frac{1}{2}|\partial_t\gamma_\eps(t)|^2-V(\gamma_\eps(t))dt\Big|_{\eps=0} \\&=\int_0^1\left<\partial_t\gamma_0(t),D_\eps\partial_t\gamma_\eps(t)\Big|_{\eps=0}\right> -\left<\nabla V_{\gamma_0(t)},\partial_\eps\gamma_\eps\Big|_{\eps=0}\right>dt \\&=\int_0^1\left<-D_t\partial_t\gamma_0(t)-\nabla V_{\gamma_0(t)},\partial_\eps\gamma_\eps(t)\Big|_{\eps=0}\right>dt.
\end{split}
\]
It follows that $D_t\partial_t\gamma=-\nabla V_\gamma$.

Let $s\in \Real$ parameterize a family of least action curves
$t\longmapsto\varphi(t,s)$ 
such that $\varphi(t,0)=\gamma(t)$. Let $J=\partial_s\varphi\Big|_{s=0}$. Then,
\[
\begin{split}
D_t^2\partial_s\varphi&= D_tD_s\partial_t\varphi \\&= D_sD_t\partial_t\varphi-R(\partial_t\varphi,\partial_s\varphi)\partial_t\varphi \\&= -D_s\nabla V_\varphi-R(\partial_t\varphi,\partial_s\varphi)\partial_t\varphi \\&  =-\Hess\, V_\varphi(\partial_s\varphi)-R(\partial_t\varphi,\partial_s\varphi)\partial_t\varphi.
\end{split}
\]
After setting $s$ to $0$, we get the equation $D_t^2J+\Hess\,V_{\gamma}(J)+R(\dot\gamma,J)\dot\gamma=0$ as claimed.
\end{proof}

\end{document}